\newtheorem{ithm}{Theorem} 
\newtheorem{thm}{Theorem}[section]
\newtheorem{cor}[thm]{Corollary}
\newtheorem{lem}[thm]{Lemma}
\newtheorem{prop}[thm]{Proposition}
\theoremstyle{definition}
\newcommand\blank{\mathord{\hbox to 1.5ex{\hrulefill}}\,}
\DeclareMathOperator\GL{GL}
\DeclareMathOperator\G{G}
\DeclareMathOperator\E{E}
\DeclareMathOperator\M{M}
\DeclareMathOperator\sign{sign}
\newcommand\Sp{\operatorname{Sp}_{2n}}
\newcommand\Ep{\operatorname{Ep}_{2n}}
\newcommand\Min{\operatorname{Min}_n}
\newcommand\Z{\mathbb Z}
\let\ph\varphi
\let\eps\varepsilon
\renewcommand\le\leqslant
\renewcommand\ge\geqslant
\newcommand\T[1]{#1^{\scriptscriptstyle\mathsf T}}
\begin{document}

\title
{Subring subgroups in symplectic groups in characteristic 2}

\author{Anthony Bak}

\address
{Bielefeld University, Postfach 100131, 33501 Bielefeld, Germany}

\email{bak@mathematik.uni-bielefeld.de}

\author{Alexei Stepanov}
\address
{St.Petersburg State University\newline
and\newline
St.Petersburg Electrotechnical University
}

\email{stepanov239@gmail.com}

\thanks
{The work of the second named author under this publication is supported by
Russian Science Foundation, grant N.14-11-00297}

\begin{abstract}
In 2012 the second author obtained a description of the lattice of subgroups of a Chevalley group
$G(\Phi,A)$, containing the elementary subgroup $E(\Phi,K)$ over a subring $K\subseteq A$ provided
$\Phi=B_n,$ $C_n$ or $F_4$, $n\ge2$, and $2$ is invertible in $K$.
It turns out that this lattice is a disjoint union of ``sandwiches''
parameterized by subrings $R$ such that $K\subseteq R\subseteq A$.

In the current article a similar result is proved for $\Phi=C_n$, $n\ge3$,
and $2=0$ in $K$.
In this setting one has to introduce more sandwiches, namely, sandwiches which are parameterized
by form rings $(R,\Lambda)$ such that $K\subseteq\Lambda\subseteq R\subseteq A$.
The result we get generalizes Ya.\,N.\,Nuzhin's theorem of
2013 concerning the root systems $\Phi=B_n,$ $C_n$, $n\ge3$, where the same description
of the subgroup lattice is obtained, but under the condition that $A$ is an algebraic extension
of a field~$K$.
\end{abstract}

\maketitle

\section*{Introduction}

Throughout this paper $K$, $R$ and $A$ will denote commutative rings.
Let $G=\G_P(\Phi,\blank)$ denote a Chevalley--Demazure group scheme
with a reduced irreducible root system $\Phi$ and weight lattice $P$.
If the weight lattice $P$ is not important, we leave it out of the notation.
Denote by $E(A)=\E_P(\Phi,A)$ the elementary subgroup
of $G(A)$, i.\,e. the subgroup generated by all elementary root
unipotent elements $x_\alpha(t)$, \ $\alpha\in\Phi$, \ $t\in A$.
Let $K$ be a subring of $A$. We study  the lattice
$\mathcal L=L\bigl(E(K),G(A)\bigr)$ of subgroups of $G(A)$,
containing $E(K)$.

The standard description of $\mathcal L$ is called a \emph{sandwich classification
theorem}. It states that for each $H\in\mathcal L$ there
exists a unique subring $R$ between $K$ and $A$ such that $H$
lies between $E(R)$ and its normalizer $N_A(R)$ in $G(A)$.
The lattice $L\bigl(E(R),N_A(R)\bigr)$ of all subgroups of $G(A)$ that lie between $E(R)$ and
$N_A(R)$ is called a \emph{standard sandwich}.
Thus, the sandwich classification theorem holds iff $\mathcal L$ is the disjoint union of all standard
sandwiches. In~\cite{StepStandard} the second author proved the sandwich classification
theorem provided that $\Phi$ is doubly laced and $2$ is invertible in $K$.

\emph{In this article we consider the symplectic case of rank $n\ge3$ with $2=0$ in $A$,
in particular, we always assume that $\Phi=C_n$}.
In this situation we show that the sandwich classification theorem, as formulated above, does not hold.
Let $R$ be a subring of $A$,
containing $K$. Recall~\cite{BakBook} that an additive subgroup $\Lambda$ of $R$ is called a
(symplectic) \emph{form parameter} if it contains $2R$ and is closed under multiplication by $\xi^2$
for all $\xi\in R$.
If $2=0$, the set $R^2=\{\xi^2\mid \xi\in R\}$ is a subring of $R$,
and a form parameter $\Lambda$ in $R$ is just an $R^2$-submodule of $R$. Let $\Ep(R,\Lambda)$
denote the subgroup of $\Sp(R)$ generated by all root unipotents $x_\alpha(\mu)$  and $x_\beta(\lambda)$,
where $\alpha$ is a short root and $\mu \in R$ and $\beta$ is a long root and $\lambda\in\Lambda$. Suppose now
that $\Lambda\supseteq K$. Then clearly $\Ep(R,\Lambda)\ge\Ep(K)$. But one can check
that if $\Lambda\ne R$ then $\Ep(R,\Lambda)$ is not contained in any standard sandwich
$L\bigl(\Ep(R'),N_A(R')\bigr)$ such that $K \subseteq R'$. This shows that  $\mathcal L$
cannot be a union of standard
sandwiches, unless we enlarge our standard sandwiches or introduce additional ones. It turns out that the
latter approach is the correct one. It is analogous to that followed by the first author
in~\cite{BakThesis} and by E.\,Abe and K.Suzuki in~\cite{AbeSuzuki,AbeNormal} in order
to provide enough  sandwiches to classify subgroups of $\Sp(R)$, which are normalized by $\Ep(R)$.

If $A$ and $K$ are field and $A$ is algebraic over $K$, then the sandwich classification
theorem was obtained by Ya.\,N.\,Nuzhin in~\cite{Nuzhin,Nuzhin13}.
In~\cite{Nuzhin13} he considered the case of doubly laced root systems
in characteristic $2$ and $G_2$ in characteristics $2$ and $3$. In all these cases sandwiches
were parameterized by pairs; each pair consists of a subring and an additive subgroup,
satisfying certain properties.

Let $F$ be a field of characteristic $2$. Then there are injections
$\Sp(F)\rightarrowtail\operatorname{SO}_{2n+1}(F)\rightarrowtail\Sp(F)$, which turn to isomorphisms
if $F$ is perfect, see~\cite{Steinberg} Theorem~28, Example~(a) after this theorem, and Remark before Theorem~29.
Note that in this case the Chevalley groups $\operatorname{G}_P(C_n,F)$ and $\operatorname{G}_P(B_n,F)$
do not depend on the weight lattice $P$ (\cite{Steinberg}, Exercise after Corollary~5 to Theorem~$4'$).
Therefore, the lattice $L\bigl(\operatorname{E}(\Phi,K),\operatorname{G}(\Phi,F)\bigr)$ embeds to the lattice
$L\bigl(\operatorname{Ep}_{2n}(\mathbb F_2),\Sp(F)\bigr)$ for $\Phi=B_n,C_n$ and a subring $K$ of $F$,
and its description follows from the main result of the current article.

Actually, the injections above are constructed in~\cite{Steinberg} on the level of Steinberg groups.
Since over a field $K_2(\Phi,F)$ is generated by Steinberg symbols, they induce the injections of Chevalley
groups. Over a ring $A$ there is no appropriate description of $K_2$ available, therefore we can not
use the above arguments. Moreover, in the ring case the group $\operatorname{G}_P(\Phi,A)$
depends on $P$. By these reasons, the adjoint group of type $C_n$ and groups of type $B_n$ over rings will be
considered in a subsequent article.

We state now our main result. Following~\cite{BakBook}, we call a pair $(R,\Lambda)$ consisting
of a ring $R$ and a form parameter $\Lambda$ in $R$ a \emph{form ring}.

\begin{ithm}\label{MAIN}
Let $A$ denote a (commutative) ring such $2=0$ in $A$.  Let $K$ be a subring of $A$. If $H$ is a subgroup
of\/ $\Sp(A)$ containing\/ $\Ep(K)$, $n\ge 3$,  then there is a unique form ring $(R,\Lambda)$ such that
$K \subseteq \Lambda  \subseteq R \subseteq A$ and

$$
\Ep(R,\Lambda)\le H\le N_A(R,\Lambda),
$$

\noindent
where $N_A(R,\Lambda)$ is the normalizer of\/ $\Ep(R,\Lambda)$ in\/ $\Sp(A)$.
\end{ithm}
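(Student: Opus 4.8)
The plan is to reduce the classification of an arbitrary overgroup $H$ to the construction of an associated form ring $(R,\Lambda)$ extracted directly from $H$, and then to sandwich $H$ between $\Ep(R,\Lambda)$ and its normalizer. First I would define the relevant data. For a subgroup $H\ge\Ep(K)$, I would set $R$ to be the set of all $\mu\in A$ such that $x_\alpha(\mu)\in H$ for some (equivalently, for the fixed) short root $\alpha$, after conjugating by Weyl group elements so that the choice of $\alpha$ does not matter; similarly I would let $\Lambda$ be the set of all $\lambda\in A$ such that $x_\beta(\lambda)\in H$ for a long root $\beta$. The first block of work is to prove that $R$ is a subring of $A$ containing $K$, that $\Lambda$ is a form parameter in $R$ (additively closed, containing $2R=0$, and stable under multiplication by squares $\xi^2$, $\xi\in R$), and that $K\subseteq\Lambda\subseteq R$. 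The containment $\Lambda\subseteq R$ and the $R^2$-module structure on $\Lambda$ should fall out of the Chevalley commutator formula in type $C_n$, where commuting a short-root element with suitable elements produces long-root elements with coefficients involving products and squares; this is exactly where the characteristic-$2$ hypothesis and the rank $n\ge3$ condition enter, since one needs enough roots at various angles to generate the required relations.

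Once $(R,\Lambda)$ is defined, the lower inclusion $\Ep(R,\Lambda)\le H$ is essentially immediate from the definitions of $R$ and $\Lambda$ together with Weyl-group conjugation: every short-root generator $x_\alpha(\mu)$ with $\mu\in R$ and every long-root generator $x_\beta(\lambda)$ with $\lambda\in\Lambda$ lies in $H$, and these generate $\Ep(R,\Lambda)$ by definition. The substance is the upper inclusion $H\le N_A(R,\Lambda)$, i.e.\ showing that every $h\in H$ normalizes $\Ep(R,\Lambda)$. For this I would show that conjugation by $h$ sends each standard root element $x_\gamma(t)$ of $\Ep(R,\Lambda)$ back into $\Ep(R,\Lambda)$; the natural route is to use the extraction/level-computation techniques (the analogue of Bak's and Abe--Suzuki's work cited in the introduction) to show that the ``level'' of $H$, captured by the pair $(R,\Lambda)$, is preserved under conjugation by any element of $H$. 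Concretely, I expect to argue that if $h\in H$ then $hx_\alpha(\mu)h^{-1}$, upon projecting onto root subgroups and reading off coefficients, yields only elements whose short-root coefficients lie in $R$ and long-root coefficients lie in $\Lambda$, forcing $h$ into the normalizer.

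Uniqueness of $(R,\Lambda)$ I would handle last and expect it to be the easiest part: if $\Ep(R,\Lambda)\le H\le N_A(R,\Lambda)$ and also $\Ep(R',\Lambda')\le H\le N_A(R',\Lambda')$, then comparing which short- and long-root elements lie in $H$ recovers $R$ and $\Lambda$ intrinsically from $H$ by the extraction definitions above, so $R=R'$ and $\Lambda=\Lambda'$. The main obstacle, I anticipate, is the upper inclusion, specifically proving that the form parameter $\Lambda$ (and not merely the ring $R$) is genuinely preserved under conjugation by arbitrary elements of $H$: since $\Lambda$ is only an $R^2$-submodule and can be strictly smaller than $R$, one cannot simply absorb long-root coefficients into $R$, and one must track the interaction between short and long roots through the commutator formulae very carefully. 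This is precisely the phenomenon, flagged in the introduction, that forces the enlargement of the sandwich framework from subrings to form rings, and it is where the bulk of the computational effort and the essential use of $2=0$ will reside.
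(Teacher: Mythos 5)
Your extraction of $(R,\Lambda)$ from $H$, the lower inclusion $\Ep(R,\Lambda)\le H$, and the uniqueness sketch all agree with the paper (Section~\ref{bottom}; uniqueness does indeed follow, via Lemma~\ref{NormalizerElem} and Proposition~\ref{Normalizer for R=A}, because root elements lying in $N_A(R,\Lambda)$ must have coefficients in $R$, resp.\ $\Lambda$). The genuine gap is in the upper inclusion $H\le N_A(R,\Lambda)$, and it is not just an omitted computation: the mechanism you propose does not exist. For an arbitrary $h\in H\le\Sp(A)$ the conjugate $hx_\gamma(t)h^{-1}$ is simply a symplectic matrix with entries in $A$; it admits no canonical factorization into root elements, so there is nothing to ``project onto root subgroups'' and no coefficients to read off. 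Membership of that matrix in $H$ places no a priori constraint on its entries either --- constraining the entries of elements of $H$ is precisely what the theorem asserts, so arguing this way is circular. Coefficients can be read off only for elements already known to lie in the unipotent radical of a parabolic subgroup, and even there an induction is needed to separate the factors (Lemma~\ref{InUniRad}). Moreover, the Bak and Abe--Suzuki ``level'' techniques you invoke classify subgroups normalized by the elementary group over the \emph{large} ring $A$; here $H$ is only assumed to contain $\Ep(K)$, which is far too little for those arguments to apply.

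What the proof actually requires is a chain of ideas absent from your plan. First, the characteristic-$2$ identity with constants (Lemma~\ref{identity}, proved by reduction to fields using the results of Golubchik--Mikhalev, Gordeev, Nesterov--Stepanov): when $2=0$, for every $g\in\Sp(A)$ and every small (short root) unipotent $h$, the group $X_\alpha(A)^{h^g}$ commutes with the long root subgroup $X_\alpha(A)$. This is the only tool that forces conjugates by \emph{arbitrary} elements of $\Sp(A)$ into a manageable position --- inside a parabolic --- where Lemma~\ref{InParabolic} applies; and this is where $2=0$ genuinely enters the proof, not in the construction of the form parameter as you suggest (Section~\ref{bottom} nowhere uses $2=0$). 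Second, the elements controlled this way land only in $N_A(R,\Lambda)$, not in $\Ep(R,\Lambda)$, so one needs the bootstrap Lemma~\ref{normalizer}: if $\Ep(R,\Lambda)^g\le N_A(R,\Lambda)$ then $g\in N_A(R,\Lambda)$. Its proof is itself substantial, using perfectness of $\Ep(R,\Lambda)$ (this is where $n\ge3$ is essential), abelianness of $N_A(R,\Lambda)/\Sp(R,\Lambda)$ (Corollary~\ref{NormComm}), and nilpotency of $\Sp(R'',\Lambda'')/\Ep(R'',\Lambda'')$ over finitely generated rings, which rests on Hazrat's theorem and Bass--Serre dimension (Theorem~\ref{nilpotent}(5)). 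Third, Lemma~\ref{NSofGU} (a normal subgroup of $\Ep(R,\Lambda)$ containing one root element is all of $\Ep(R,\Lambda)$) is needed to propagate from the specific conjugates handled by the identity with constants to all of $\Ep(R,\Lambda)^g$. ``Tracking the commutator formulae very carefully'' cannot substitute for this machinery; without it the inclusion $H\le N_A(R,\Lambda)$ remains unproven.
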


Let $\mathcal L(R,\Lambda)=L\bigl(\Ep(R,\Lambda), N_A(R,\Lambda)\bigr)$ denote the lattice of all
subgroups $H$ of $\Sp(A)$ such that $\Ep(R,\Lambda)\le H\le N_A(R,\Lambda)$.
From now on, $\mathcal L(R,\Lambda)$ is what we shall mean by a \emph{standard sandwich}.
In view of Theorem~\ref{MAIN} it is natural to study the lattice structure of $\mathcal L(R,\Lambda)$.
By definition $\Ep(R,\Lambda)$ is normal in $N_A(R,\Lambda)$. Therefore the lattice structure of
$\mathcal L(R,\Lambda)$ is the same as that of the quotient group $N_A(R,\Lambda)/\Ep(R,\Lambda)$.
The lattice $\mathcal L(R,\Lambda)$
contains an important subgroup $\Sp(R,\Lambda)$, which is called a Bak symplectic group,
whose definition will be recalled in Section~1. A group will be called \emph{quasi-nilpotent} if it is a
direct limit of nilpotent subgroups, i.\,e. the group has a directed system of nilpotent subgroups
whose colimit is the group itself.

In the following result we use the notion of Bass--Serre
dimension of a ring introduced by the first author in~\cite[\S~4]{BakNonabelian}.
Recall that the Krull (or combinatorial) dimension of a topological space is the supremum
of the lengths of proper chains of nonempty closed irreducible subsets.
Bass--Serre dimension $d=\operatorname{BS-dim} R$ of a ring $R$ is the smallest integer such that the
maximal spectrum $\operatorname{Max}R$ is a union of a finite number of irreducible Noetherian
subspaces of Krull dimension not greater than $d$. If there is no such integer, then
$\operatorname{BS-dim} R=\infty$.

\begin{ithm}\label{nilpotent}
Let $A$ denote a (commutative) ring and let $R$ be a subring of $A$
and $(R,\Lambda)$ a form ring. Suppose $n\ge 2$.
\begin{enumerate}
\item $\Sp(R,\Lambda) = \Sp(R) \cap N_A(R,\Lambda)$.
\item $\Sp(R,\Lambda)$ is normal in $N_A(R,\Lambda)$.
\item $N_A(R,\Lambda)/\Sp(R,\Lambda)$ is abelian.
\item $\Sp(R,\Lambda)/\Ep(R,\Lambda)$ is quasi-nilpotent.
\item Let $R_0$ denotes the subring of $R$, generated by all elements $\xi^2$ such that
$\xi\in R$. If Bass--Serre dimension of $R_0$ is finite, then $\Sp(R,\Lambda)/\Ep(R,\Lambda)$ is nilpotent.
\end{enumerate}
In particular, the sandwich quotient group $N_A(R,\Lambda)/\Ep(R,\Lambda)$ is quasi-nilpotent
by abelian or nilpotent by abelian if $\operatorname{BS-dim} R_0<\infty$.
\end{ithm}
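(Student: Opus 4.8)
The five assertions fall into two groups: the \emph{structural} statements (1)--(3), which pin down $N_A(R,\Lambda)$ in terms of the form ring, and the \emph{$K$-theoretic} statements (4)--(5), which govern the quotient $\Sp(R,\Lambda)/\Ep(R,\Lambda)$; the concluding sentence is then purely formal. I would first dispose of (1). The inclusion $\Sp(R,\Lambda)\subseteq\Sp(R)\cap N_A(R,\Lambda)$ is immediate, since $\Sp(R,\Lambda)\le\Sp(R)$ by definition and, by the normality of the elementary subgroup in the Bak group recalled in Section~1, $\Sp(R,\Lambda)$ normalizes $\Ep(R,\Lambda)$. The substance is the reverse inclusion: an element $g=(g_{ij})\in\Sp(R)$ that normalizes $\Ep(R,\Lambda)$ automatically preserves the underlying quadratic form modulo $\Lambda$. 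I would extract this by conjugating a long-root (quadratic) generator $x_\beta(\lambda)$, $\lambda\in\Lambda$, and demanding $g\,x_\beta(\lambda)\,g^{-1}\in\Ep(R,\Lambda)$; reading off the long-root coefficients forces the column defects $\sum_i g_{i,j}g_{n+i,j}$ to lie in $\Lambda$, which are exactly the congruences defining $\Sp(R,\Lambda)$.

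For (2) and (3) I would argue in one stroke by exhibiting $\Sp(R,\Lambda)$ as the kernel of a homomorphism $\theta\colon N_A(R,\Lambda)\to M$ into an abelian group. Conjugation by $g\in N_A(R,\Lambda)$ permutes the root generators of $\Ep(R,\Lambda)$ and rescales their coefficients; recording this rescaling (equivalently, the induced action on the form ring $(R,\Lambda)$) defines $\theta$, which is a homomorphism because conjugation composes multiplicatively. By (1), an element of $N_A(R,\Lambda)$ lies in $\Sp(R,\Lambda)$ exactly when it has entries in $R$ and fixes the form modulo $\Lambda$, i.e.\ exactly when $\theta$ vanishes on it. Thus $\Sp(R,\Lambda)=\ker\theta$ is normal, proving (2), and $N_A(R,\Lambda)/\Sp(R,\Lambda)$ embeds in the abelian group $M$, proving (3). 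Concretely this amounts to checking that every commutator $[g,h]$ with $g,h\in N_A(R,\Lambda)$ already has entries in $R$, whence $[g,h]\in\Sp(R)\cap N_A(R,\Lambda)=\Sp(R,\Lambda)$ by (1); this gives $[N_A(R,\Lambda),N_A(R,\Lambda)]\le\Sp(R,\Lambda)$ and, via $ghg^{-1}=[g,h]\,h$, the normality as well.

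Parts (4) and (5) are the heart of the matter. I would identify $\Sp(R,\Lambda)/\Ep(R,\Lambda)$ with a nonstable unitary $K_1$-functor of the form ring and invoke the localization--completion method of the first author~\cite{BakNonabelian}, in its version for symplectic form rings in characteristic $2$. For (5) this yields nilpotence once the controlling ring has finite Bass--Serre dimension, and the controlling ring is $R_0$: in characteristic $2$ the Frobenius is a ring homomorphism, so $R_0=R^2$ (already noted in the introduction to be a subring) and $\Lambda$ is an $R_0$-module, which forces the relevant relative $K_1$ to be an $R_0$-module and hence its nilpotency class to be bounded in terms of $\operatorname{BS-dim}R_0$ rather than $\operatorname{BS-dim}R$. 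For (4) in full generality I would write $R$ as the directed union of its finitely generated subrings $R'$, equipped with the induced form parameters $\Lambda'$; each $R_0'=R'^2$ is a finitely generated $\mathbb F_2$-algebra, hence Noetherian of finite Krull dimension and thus of finite Bass--Serre dimension, so (5) makes each $\Sp(R',\Lambda')/\Ep(R',\Lambda')$ nilpotent. Passing to the colimit exhibits $\Sp(R,\Lambda)/\Ep(R,\Lambda)$ as a direct limit of nilpotent subgroups, i.e.\ as quasi-nilpotent.

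The concluding ``in particular'' is then immediate: by (2) we have the normal chain $\Ep(R,\Lambda)\trianglelefteq\Sp(R,\Lambda)\trianglelefteq N_A(R,\Lambda)$, whose top quotient is abelian by (3) and whose bottom quotient is quasi-nilpotent by (4) --- nilpotent by (5) when $\operatorname{BS-dim}R_0<\infty$ --- so $N_A(R,\Lambda)/\Ep(R,\Lambda)$ is quasi-nilpotent-by-abelian, respectively nilpotent-by-abelian. I expect the principal obstacle to be (5): checking that the hypotheses of the nilpotency machinery --- the requisite injective and surjective stability ranges for $\Ep(R,\Lambda)$ and the local--global principle --- really hold in the characteristic-$2$ symplectic form-ring setting, and pinning down $R_0$ as the ring that controls the dimension bound. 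The reverse inclusion in (1) is a secondary, purely computational, hurdle.
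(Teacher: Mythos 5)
Your overall architecture matches the paper's: (1) by conjugating long-root elements, (2)--(3) by reducing to the claim that commutators of normalizer elements have entries in $R$, (5) by the Bak--Hazrat localization--completion method with $R_0$ as controlling ring, and (4) from (5) by a direct limit over finitely generated subrings. But at the places where a genuine idea is required, your sketch asserts the conclusion rather than supplying the idea. First, the homomorphism $\theta$ does not exist as described: an element of $N_A(R,\Lambda)$ does not permute root subgroups or ``rescale'' their coefficients --- it sends a root element to an unstructured product of elementary generators --- so $\theta$ cannot be defined that way. The concrete claim you fall back on (every commutator $[g,h]$ with $g,h\in N_A(R,\Lambda)$ has entries in $R$) is exactly the nontrivial point, and you give no argument for it. The paper's proof (Lemma~\ref{NormalizerElem} and Corollary~\ref{NormComm}) rests on the observation that the matrix units lie in the $R$-algebra generated by $\Ep(R,\Lambda)$, e.g.\ $e_{jk}=(T_{ji}(1)-e)(T_{ik}(1)-e)$ and $e_{jj}=e_{jk}e_{kj}$, so that $\Ep(R,\Lambda)^g\le\GL_{2n}(R)$ forces $g'_{pi}g_{jk}\in R$ for all indices, after which one regroups $[g,h]_{pq}=\sum (g'_{pi}g_{jk})(h'_{ij}h_{kq})\in R$. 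A similar hidden step sits inside your part (1): conjugating $x_\beta(\lambda)$ by $g=\left(\begin{smallmatrix}a&b\\c&d\end{smallmatrix}\right)$ does not let you ``read off'' the defects; what membership of the conjugate in $\Sp(R,\Lambda)$ actually yields are expressions such as $cJc^*aJc^*,\ aJc^*aJa^*\in\M_n(R,\Lambda)$, i.e.\ the defect multiplied by squares of entries of $g$, and recovering $c^*a\in\M_n(R,\Lambda)$ from these requires the $\M_n(R)$-module structure on $\M_n(R,\Lambda)/\Min(R)$ (Lemma~\ref{barM}) combined with $d^*a-b^*c=e$, as in Proposition~\ref{Normalizer for R=A}.

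Second, for (5) the nilpotency machinery of Hazrat applies to $\Sp^0(R,\Lambda)/\Ep(R,\Lambda)$, where $\Sp^0(R,\Lambda)$ is the intersection of the kernels of all maps $\Sp(R,\Lambda)\to\Sp(\tilde R,\tilde\Lambda)/\Ep(\tilde R,\tilde\Lambda)$ induced by form-ring morphisms with $\tilde R_0$ semilocal; it says nothing directly about $\Sp/\Ep$. The missing bridge --- which you flag as an ``expected obstacle'' but do not close --- is the semilocal case: if $R_0$ is semilocal then $\Sp(R,\Lambda)=\Ep(R,\Lambda)$ (Lemma~\ref{semilocal}), which the paper proves using integrality of $R$ over $R_0$ (each $\xi\in R$ is a root of $x^2-\xi^2\in R_0[x]$), Cohen--Seidenberg, and \cite[Lemma~4]{Bak75}. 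That same integrality, not the fact that $\Lambda$ is an $R_0$-module, is why $R_0$ controls the dimension bound: Hazrat's hypotheses require module-finiteness over a base of finite Bass--Serre dimension, and $R$ is a direct limit of module-finite $R_0$-subalgebras. Finally, your treatment of (4)--(5) silently assumes $2=0$ (you write $R_0=R^2$ and ``finitely generated $\mathbb F_2$-algebras''), whereas the theorem is stated, and proved in the paper, for arbitrary commutative rings; in general the set of squares is not closed under addition, and the finite generation of $R_0$ for a finitely generated $\Z$-algebra $R$ needs a separate argument (Lemma~\ref{FinGen}, via $2\xi=(\xi+1)^2-\xi^2-1$, so $2R\subseteq R_0$).
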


Although Theorem~\ref{MAIN} is proven under the assumptions $2=0$ and $n\ge3$, we do not invoke these
assumptions until the proof of the theorem in Section~6. In particular, Theorem~\ref{nilpotent} is
proven without these assumptions. Of course, we always assume that $n\ge2$.

\subsection*{Notation}
Let $H$ be a group.
For two elements $x,y\in H$ we write $[x,y]=xyx^{-1}y^{-1}$ for their
commutator and $x^y=y^{-1}xy$ for the $y$-conjugate of $x$.
For subgroups $X,Y\le H$ we let $X^Y$ denote the normal closure of
$X$ in the subgroup generated by $X$ and $Y$,
while $[X,Y]$ stands for the mixed commutator group generated by $X$ and $Y$.
By definition it is the group generated by all commutators $[x,y]$ such that
$x\in X$ and $y\in Y$.
The commutator subgroup $[X,X]$ of $X$ will be also denoted by $D(X)$ and we
set $D^k(X)=\bigl[D^{k-1}(X),D^{k-1}(X)\bigr]$.
Recall that a group $X$ is called \emph{perfect} if $D(X)=X$.

The identity matrix is denoted by $e$ as well as the identity element of a Chevalley group.
We denote by $e_{ij}$ the matrix with $1$ in position~$(ij)$
and zeroes elsewhere. The entries of a matrix $g$ are denoted by $g_{ij}$.
For the entries of the inverse matrix we use abbreviation $(g^{-1})_{ij}=g'_{ij}$.
The transpose of the matrix $g$ is denoted by $\T g$, thus $(\T g)_{ij}=g_{ji}$.


\section{The symplectic group}\label{Sp}

The symplectic group $\Sp(R)$, its elementary root unipotent elements, and its elementary
subgroup $\Ep(R)$ will be recalled below. The groups $\Sp(R,\Lambda)$ and their elementary
subgroups $\Ep(R,\Lambda)$ will be defined in Section~\ref{Sp(R,Lambda)}.

Since the groups $\Sp(R,\Lambda)$ are not in general algebraic
(in fact, $\Sp(R,\Lambda)$ is algebraic iff $\Lambda=R$ or $\Lambda=\{0\}$),
it is convenient to work with the standard matrix representation of $\Sp(R)$.
On the other hand, we want to use the notions of parabolic subgroup, unipotent radical, etc.\
from the theory of algebraic groups. Thus, to simplify the exposition, we define below these notions
directly in terms of the matrix representation we use.

Following Bourbaki~\cite{Bourbaki4-6} we view the root system $C_n$ and its set of fundamental
roots $\Pi$ in the following way.
Let $V_n$ denote $n$-dimensional euclidean space with the orthonormal basis $\eps_1,\dots,\eps_n$.
Let

\begin{align*}
C_n&=\{\pm\eps_i\pm\eps_j\mid 1\le i<j\le n\}\cup\{\pm2\eps_k\mid 1\le k\le n\},\\
\Pi&=\{\alpha_i=\eps_i-\eps_{i+1},\text{ where }i=1,\dots,n-1,\ \alpha_{n}=2\eps_n\}.
\end{align*}

\noindent
The elements $\pm\eps_i\pm\eps_j$ are called \emph{short roots} and the elements $\pm2\eps_k$ \emph{long roots}.

From the perspective of algebraic groups, we want the standard Borel subgroup of $\Sp(R)$ to be the
group of all upper triangular matrices in $\Sp(R)$.
Accordingly, we take the following matrix description of $\Sp(R)$.
Let $J$ denote the $n\times n$-matrix with 1 in each antidiagonal position and zeroes elsewhere. Let
$F=\left(\begin{smallmatrix}0&J\\ -J& 0\end{smallmatrix}\right)$.
Then, the group $\Sp(R)$ is the
subgroup of $\GL_{2n}(R)$ consisting of all matrices which preserve the bilinear
form whose matrix is $F$. In other words,
$$
\Sp(R)=\{g\in\GL_{2n}(R)\mid \T{g}Fg=F\}.
$$

Let $I=(1,\dots,n,-n,\dots,-1)$ denote the linearly ordered set whose linear ordering is obtained
by reading from left to right.
We enumerate the rows and columns of the matrices of
$\GL_{2n}(R)$ by indexes from $I$. Thus the position of a coordinate of a matrix in
$\GL_{2n}(R)$ is denoted by a pair $(i,j)\in I\times I$.

In the language of algebraic groups the set of all diagonal matrices in $\Sp(R)$ is a \emph{maximal torus}.
Let $e$ denote the $2n\times 2n$ identity matrix.
The following matrices are \emph{elementary root unipotent elements} of $\Sp(R)$
with respect to the torus above:

\begin{align*}
x_{\eps_i-\eps_j}(\xi)&=T_{ij}(\xi)=T_{-j,-i}(-\xi)=e+\xi e_{ij}-\xi e_{-j,-i},\\
x_{\eps_i+\eps_j}(\xi)&=T_{i,-j}(\xi)=T_{j,-i}(\xi)=e+\xi e_{i,-j}+\xi e_{j,-i},\\
x_{-\eps_i-\eps_j}(\xi)&=T_{-i,j}(\xi)=T_{-j,i}(\xi)=e+\xi e_{-i,j}+\xi e_{-j,i},\\
x_{2\eps_k}(\xi)&=T_{k,-k}(\xi)=e+\xi e_{k,-k},\\
x_{-2\eps_k}(\xi)&=T_{-k,k}(\xi)=e+\xi e_{-k,k},\\
&\text{where }\xi\in R,\ 1\le i,j,k\le n,\ i\ne j.\\
\end{align*}

Note that the subscripts $(i,j)$, $(i,j)$, $(i,-j)$, $(j,-i)$, $(-j,-i)$, $(k,-k)$, and $(-k,k)$
on the $T$ above all belong to the set
$\tilde C_n=I\times I\setminus \{(k,k)\mid k\in I\}$ of nondiagonal positions of a matrix
from $\Sp(R)$ and exhaust $\tilde C_n$.

There is a surjective map $p:\tilde C_n\to C_n$ defined by the following rule.

\begin{align*}
p(i,j)&=p(-j,-i)=\eps_i-\eps_j;\\
p(i,-j)&=p(j,-i)=\eps_i+\eps_j;\\
p(-i,j)&=p(-j,i)=-\eps_i-\eps_j;\\
p(k,-k)&=2\eps_k;\\
p(-k,k)&=-2\eps_k;\\
\text{where }&\xi\in R,\ 1\le i,j,k\le n,\ i\ne j.\\
\end{align*}

\noindent
With this notation the correspondence between elementary symplectic transvections $T_{ij}(\xi)$
and root elements $x_\alpha(\xi)$ looks as follows.
$$
T_{ij}(\xi)=x_{p(i,j)}(-\sign(ij)\xi)\qquad\text{for all } i\ne j\in I.
$$
Note that $p$ maps symmetric (with respect to the antidiagonal) positions to the same root.
Therefore, for $(ij)\in I$ and $\xi\in R$ we have $T_{i,j}(\xi) = T_{-j,-i}(-\sign(ij)\xi)$.


The root subgroup scheme $X_\alpha$ is defined by $X_\alpha(R)=\{x_\alpha(\xi)\mid\xi\in R\}$.
The scheme $X_\alpha$ is naturally isomorphic to $\mathbb G_a$, i.\,e.
$x_\alpha(\xi)x_\alpha(\mu)=x_\alpha(\xi+\mu)$ for all $\xi,\mu\in R$.
The following commutator formulas are
well known in matrix language, cf.~\cite[\S~3]{BakVavHyp1}. They are special cases of the Chevalley
commutator formula in the algebraic group theory.

\begin{align*}
[x_\alpha(\lambda),x_\beta(\mu)]&=x_{\alpha+\beta}(\pm\lambda\mu),
\text{ if }\alpha+\beta\in\Phi,\,\widehat{\alpha\beta}=2\pi/3;\\
[x_\alpha(\lambda),x_\beta(\mu)]&=x_{\alpha+\beta}(\pm2\lambda\mu),
\text{ if }\alpha+\beta\in\Phi,\,\widehat{\alpha\beta}=\pi/2;\\
[x_\alpha(\lambda),x_\beta(\mu)]&=x_{\alpha+\beta}(\pm\lambda\mu)x_{\alpha+2\beta}(\pm\lambda\mu^2),
\text{ if }\alpha+\beta,\alpha+2\beta\in\Phi,\,\widehat{\alpha\beta}=3\pi/4;\\
[x_\alpha(\lambda),x_\beta(\mu)]&=e,
\text{ if }\alpha+\beta\notin\Phi\cup\{0\}
\end{align*}

In our proofs we make frequent use of the \emph{parabolic subgroup}~$P_1$ of $\Sp$.
In the matrix language above it is defined as follows:

$$
P_1(R)=\{g\in\Sp{R}\mid g_{i1}=g_{-1-i}=0\ \forall i\ne1\}
$$

\noindent
The definition above of $\Sp(R)$ shows that $g_{11}=g^{-1}_{-1,-1}$ for
any matrix $g\in P_1(R)$. The \textit{unipotent radical} $U_1$ of $P_1$ is the subgroup
generated by all root subgroups $T_{1i}$ such that $i\ne 1$.
The \emph{Levi subgroup} $L_1(R)$ of $P_1(R)$ consists of all $g\in P_1(R)$
such that $g_{1i}=g_{-i,-1}=0$ for all $i\ne 1$. As a group scheme it is isomorphic
to $\mathbb G_m\times\operatorname{Sp}_{2n-2}$.

\section{Bak symplectic groups}\label{Sp(R,Lambda)}
The Bak symplectic group $\Sp(R,\Lambda)$ is the particular case of the Bak general
unitary group, where the involution is trivial and the symmetry $\lambda=-1$.
The main references for the definition and the structure of the general unitary group
is the book~\cite{BakBook} and the paper~\cite{BakVavHyp1} by N.\,Vavilov and the first author.
In this section we recall definitions and simple properties to be used in the sequel.

Let $R$ be a commutative ring.
An additive subgroup $\Lambda$ of $R$ is called a \emph{symplectic form parameter}
in $R$, if it contains $2R$ and is closed under multiplication by squares, i.\,e. $\mu^2\lambda\in\Lambda$
for all $\mu\in R$ and $\lambda\in\Lambda$.
Define $\Sp(R,\Lambda)$ as the subgroup of $\Sp(R)$ consisting of all matrices preserving
the quadratic form which takes values in $R/\Lambda$ and is defined by the matrix
$\left(\begin{smallmatrix}0 & J\\ 0 & 0\end{smallmatrix}\right)$.

Let $*$ denote the involution on the matrix ring $\M_n(R)$ given by the formula
$a^*=J\T aJ$. Note that this is the reflection of a matrix with respect to the antidiagonal.
Define
$$
\M_n(R,\Lambda)=\{a\in\M_n(R)\mid a=a^*,\,a_{n-k+1\,k}\in\Lambda\text{ for all }k=1,\dots,n\}.
$$
Write a matrix of degree $2n$ in the block form
$\left(\begin{smallmatrix}a & b\\ c & d\end{smallmatrix}\right)$, where $a,b,c,d\in\M_n(R)$.
It follows from~\cite[Lemma~2.2]{BakVavHyp1} and its proof that under the notation above
we have the following formula.

\begin{lem}\label{BakSp}
$
\Sp(R,\Lambda)=\left\{\begin{pmatrix}a & b\\ c & d\end{pmatrix}\mid
a^*d-c^*b=e\text{ and }c^*a,d^*b\in\M_n(R,\Lambda)\right\}.
$
\end{lem}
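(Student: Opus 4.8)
The plan is to argue directly from the definition of $\Sp(R,\Lambda)$ as the stabilizer in $\Sp(R)$ of the quadratic form $q(v)=\T vBv+\Lambda$ valued in $R/\Lambda$, where $B=\left(\begin{smallmatrix}0&J\\0&0\end{smallmatrix}\right)$. Since $B-\T B=F$, the alternating form associated with $q$ is exactly the symplectic form preserved by $\Sp(R)$; hence for any $g\in\Sp(R)$ the matrix $C:=\T gBg-B$ is symmetric, because $C-\T C=\T g(B-\T B)g-(B-\T B)=\T gFg-F=0$. Therefore an element $g\in\Sp(R)$ lies in $\Sp(R,\Lambda)$ if and only if $\T vCv\in\Lambda$ for every $v\in R^{2n}$.

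The first key step would be to reduce this last condition to a condition on the diagonal entries of $C$ alone. Using the symmetry of $C$ I would write $\T vCv=\sum_iC_{ii}v_i^2+2\sum_{i<j}C_{ij}v_iv_j$. Evaluating on standard basis vectors shows that $\T vCv\in\Lambda$ for all $v$ forces $C_{ii}\in\Lambda$; conversely, if all $C_{ii}\in\Lambda$ then each term $v_i^2C_{ii}$ lies in $\Lambda$ because $\Lambda$ is closed under multiplication by squares, while $2\sum_{i<j}C_{ij}v_iv_j\in2R\subseteq\Lambda$, so $\T vCv\in\Lambda$. Thus $g\in\Sp(R,\Lambda)$ iff $g\in\Sp(R)$ and every diagonal entry of $C$ lies in $\Lambda$. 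This reduction, which uses both defining properties of a form parameter, is the conceptual heart of the argument.

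It then remains to do the bookkeeping with blocks and the involution. From $a^*=J\T aJ$ and $J^2=e$ one gets $\T aJ=Ja^*$, and a direct computation gives
\[
C=\begin{pmatrix}Ja^*c & J(a^*d-e)\\ Jb^*c & Jb^*d\end{pmatrix},
\]
so the diagonal entries of $C$ are those of the blocks $Ja^*c$ and $Jb^*d$. Since $(Jx)_{ii}=x_{n+1-i,i}$, these are the antidiagonal entries of $a^*c$ and of $b^*d$. Expanding the relation $\T gFg=F$ in blocks yields $a^*d-c^*b=e$ together with $a^*c=c^*a$ and $b^*d=d^*b$, the last two expressing that $c^*a$ and $d^*b$ are fixed by $*$. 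Combining $*$-symmetry with the membership of the antidiagonal entries in $\Lambda$ is exactly the statement that $c^*a,d^*b\in\M_n(R,\Lambda)$, which together with $a^*d-c^*b=e$ gives the asserted description. The one point requiring care—rather than real difficulty—is matching the diagonal positions of $C$, indexed by $I=(1,\dots,n,-n,\dots,-1)$, with the antidiagonal positions $(n-k+1,k)$ that enter the definition of $\M_n(R,\Lambda)$; I expect this index bookkeeping, not any single estimate, to be the main thing to get right.
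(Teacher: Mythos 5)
Your proof is correct, but it does not follow the paper's route, because the paper contains no self-contained proof of this lemma at all: the statement is quoted as a consequence of \cite[Lemma~2.2]{BakVavHyp1} and its proof, i.e., of the block description of hyperbolic unitary groups over a form ring with an arbitrary involution and symmetry, specialized here to the trivial involution and symmetry $\lambda=-1$. Your argument is instead a direct computation from the definition in Section~\ref{Sp(R,Lambda)}: membership of $g\in\Sp(R)$ in $\Sp(R,\Lambda)$ is the condition $\T{v}Cv\in\Lambda$ for all $v$, where $C=\T{g}Bg-B$; the identity $B-\T{B}=F$ makes $C$ symmetric for $g\in\Sp(R)$; and the reduction of this condition to the diagonal entries of $C$ is exactly where both form-parameter axioms (closure under multiplication by squares, and $2R\subseteq\Lambda$) are used. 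The block bookkeeping then correctly identifies the diagonal of $C$ with the antidiagonal entries of $a^*c$ and $b^*d$, while the relations extracted from $\T{g}Fg=F$ supply precisely the $*$-invariance of $c^*a$ and $d^*b$. What your route buys is a proof readable inside the paper with no external input, which moreover makes visible where each axiom of a form parameter enters; what the paper's citation buys is brevity and consistency with the general unitary-group framework it relies on elsewhere (the module structure on $\bar\M_n(R,\Lambda)$ in Lemma~\ref{barM}, the Bak--Vavilov normality theorem). One small step you should add for the inclusion $\supseteq$: the set on the right-hand side is not a priori contained in $\GL_{2n}(R)$, so after observing that $c^*a,d^*b\in\M_n(R,\Lambda)$ and $a^*d-c^*b=e$, together with the $*$-conjugate relation $d^*a-b^*c=e$, yield $\T{g}Fg=F$, note that $\det F$ is a unit, which forces $\det g$ to be a unit; alternatively, check directly that $\left(\begin{smallmatrix}d^*&-b^*\\ -c^*&a^*\end{smallmatrix}\right)$ is a two-sided inverse of $g$. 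With that one line the equality of the two sets is complete.
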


It is easy to check that $\M_n(R,\Lambda)$ is a form parameter in the ring $\M_n(R)$ with
involution $*$, corresponding to the symmetry $\lambda=-1$.
The minimal form parameter in $\M_n(R)$ with the same involution and symmetry is
$\M_n(R,2R)$; it is denoted by $\Min(R)$.
Let $\bar\M_n(R,\Lambda)$ denote the additive group $\M_n(R,\Lambda)/\Min(R)$.

\begin{lem}\label{barM}
The group $\bar\M_n(R,\Lambda)$
has a natural structure of a left $M_n(R)$-module under the operation
$a\circ\bar b=aba^*\mod\Min(R)$, where $a\in\M_n(R)$, \ $\bar b\in\bar\M_n(R,\Lambda)$,
and $b$ is a preimage of $\bar b$ in $\M_n(R,\Lambda)$.
\end{lem}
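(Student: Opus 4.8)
The plan is to verify the left $\M_n(R)$-module axioms for $\circ$ directly from the matrix formula, reducing everything to two elementary properties of the antidiagonal involution: that it reverses products, $(xy)^*=y^*x^*$ with $e^*=e$, and the key congruence $x+x^*\in\Min(R)$ valid for every $x\in\M_n(R)$.

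First I would settle well-definedness. Since, as already noted, both $\M_n(R,\Lambda)$ and $\Min(R)=\M_n(R,2R)$ are form parameters in the ring $\M_n(R)$ with involution $*$ and symmetry $-1$, they are by definition stable under $b\mapsto aba^*$ for every $a\in\M_n(R)$; thus $a\,\M_n(R,\Lambda)\,a^*\subseteq\M_n(R,\Lambda)$ and $a\,\Min(R)\,a^*\subseteq\Min(R)$, so $\circ$ descends to a well-defined operation on the quotient $\bar\M_n(R,\Lambda)$. The routine axioms are then immediate: additivity in the second variable follows from $a(b_1+b_2)a^*=ab_1a^*+ab_2a^*$; the unit law from $e\circ\bar b=ebe^*=b$ (using $e^*=e$); and the action identity $(ab)\circ\bar c=a\circ(b\circ\bar c)$ from $(ab)c(ab)^*=a(bcb^*)a^*$ via $(ab)^*=b^*a^*$.

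The one axiom that genuinely forces the passage to the quotient, and the step I would flag as the crux, is additivity in the ring variable, $(a_1+a_2)\circ\bar b=a_1\circ\bar b+a_2\circ\bar b$. Expanding $(a_1+a_2)b(a_1+a_2)^*$ leaves the cross term $a_1ba_2^*+a_2ba_1^*$; setting $x=a_1ba_2^*$ and using $b=b^*$ (which holds since $b\in\M_n(R,\Lambda)$) gives $x^*=a_2b^*a_1^*=a_2ba_1^*$, so this cross term is precisely $x+x^*$. It therefore remains to prove the congruence $x+x^*\in\Min(R)$ for all $x\in\M_n(R)$, which I would do by a one-line entry computation: from $(x^*)_{ij}=x_{n-j+1,\,n-i+1}$ one sees that $x+x^*$ is symmetric and that its antidiagonal entry in position $(n-k+1,k)$ equals $2x_{n-k+1,k}\in 2R$, whence $x+x^*\in\M_n(R,2R)=\Min(R)$. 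This makes the cross term vanish modulo $\Min(R)$ and completes the verification; the only nonformal ingredient is this last congruence, everything else being bookkeeping with the involution.
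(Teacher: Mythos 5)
Your proof is correct. The paper in fact offers no proof of this lemma at all: it is stated as routine, right after the remark that $\M_n(R,\Lambda)$ and $\Min(R)=\M_n(R,2R)$ are form parameters in $(\M_n(R),*)$ with symmetry $-1$, and your argument is exactly the natural filling-in of those details — well-definedness from the form-parameter stability $a\Gamma a^*\subseteq\Gamma$, the unit and associativity laws from $e^*=e$ and $(ab)^*=b^*a^*$, and additivity in the ring variable from the congruence $x+x^*\in\Min(R)$, which you verify correctly via $(x+x^*)_{n-k+1,k}=2x_{n-k+1,k}\in 2R$ together with $*$-symmetry of $x+x^*$. You also correctly single out the crux: only the axiom $(a_1+a_2)\circ\bar b=a_1\circ\bar b+a_2\circ\bar b$ genuinely uses $b=b^*$ and the passage to the quotient modulo $\Min(R)$; everything else holds already on the level of $\M_n(R,\Lambda)$ itself.
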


By abuse of notation for $a\in\M_n(R)$ and $b\in\M_n(R,\Lambda)$ we shall write $a\circ b$ instead of
$a\circ(b+\Min(R))$.

It is easy to check that an elementary root unipotent element $x_\alpha(\xi)\in\Sp(R)$ belongs to
$\Sp(R,\Lambda)$
if and only if $\alpha$ is a short root or $\xi\in\Lambda$. Denote by $\Ep(R,\Lambda)$ the group
generated by all such elements:
$$
\Ep(R,\Lambda)=\langle x_\alpha(\xi)\mid
\alpha\in C_n^{short}\,\&\,\xi\in R\,\lor\alpha\in C_n^{long}\,\&\,\xi\in\Lambda\rangle.
$$

\section{Subgroups generated by elementary root unipotents}\label{bottom}

In this section we assume that $n\ge3$.
Let $H$ be a subgroup of $\Sp(A)$, containing $\Ep(K)$.
The following lemma shows that we can uncouple a short root element from
a long one inside $H$.

\begin{lem}\label{RightSide}
Let $\alpha,\beta\in C_n$ be a short and a long root, respectively, such that
$\alpha+\beta$ is not a root.
Let $g=x_{\alpha}(\mu)x_{\beta}(\lambda)$, where $\lambda,\mu\in A$.
If\/ $\Ep(K)^g\le H$ (e.\,g. $g\in H$), then each factor of $g$ belongs to $H$.
\end{lem}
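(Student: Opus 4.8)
The plan is to exploit the fact that $\alpha$ is short and $\beta$ is long, so that the two factors live in very different parts of the root system, and to use the commutator formulas together with the hypothesis $\Ep(K)^g\le H$ to separate them. First I would fix a suitable short root $\gamma\in C_n$ with the property that $\gamma+\alpha$ is a root but $\gamma+\beta$ is not (or vice versa); the assumption $n\ge3$ guarantees enough room in the root system to make such a choice. Then for $t\in K$ I would compute the commutator $[x_\gamma(t),g]$. Since conjugation by $g$ sends $\Ep(K)$ into $H$, and in particular $x_\gamma(t)^g\in H$, the element $[x_\gamma(t),g]=x_\gamma(t)^{-1}\cdot x_\gamma(t)^g$ lies in $H$. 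Using $x_\gamma(t)^g = g^{-1}x_\gamma(t)g$ and expanding $g=x_\alpha(\mu)x_\beta(\lambda)$ via the Chevalley commutator formula, the $x_\beta(\lambda)$ factor will commute with $x_\gamma(t)$ (because $\gamma+\beta\notin\Phi$), so only the short-root part $x_\alpha(\mu)$ contributes. This should produce inside $H$ an element that depends only on $\mu$ (times elements of $\Ep(K)$), isolating the short factor.

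The key computational step is therefore to show, via the commutator identities listed in the excerpt, that the conjugate $x_\gamma(t)^g$ differs from $x_\gamma(t)$ by a product of root elements all of whose roots are supported away from $\beta$, hence expressible in terms that let us recover $x_\alpha(\mu)$ modulo $\Ep(K)$. Once I have arranged that $x_\alpha(\mu)$ (or a power / $K$-multiple thereof that generates the relevant root subgroup together with $\Ep(K)$) lies in $H$, it follows immediately that $x_\beta(\lambda)=x_\alpha(\mu)^{-1}g\in H$ as well, giving the second factor. The symmetry between the two factors means that once one is extracted, the other comes for free from $g\in H$ (note that $g\in H$ follows from $\Ep(K)^g\le H$ only if one already knows $g\in H$; more carefully, the hypothesis is $\Ep(K)^g\le H$, so I must be sure the extracted elements genuinely lie in $H$ and not merely in a conjugate).

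The main obstacle I anticipate is bookkeeping in characteristic-free form: the commutator $[x_\gamma,x_\alpha]$ for a short $\gamma$ and short $\alpha$ summing to a root can land on either a short or a long root, and when it lands on a long root the coefficient may fall outside a form parameter, which is exactly the subtlety the paper is built around. So I would choose $\gamma$ so that $\gamma+\alpha$ is short (an angle of $2\pi/3$ between two short roots of $C_n$ whose sum is short is available when $n\ge3$), keeping all produced root elements short and hence automatically in $\Ep(K)$-controlled position. The delicate point is ensuring that the element of $H$ I produce really is a single root element $x_{\alpha+\gamma}(\pm t\mu)$ modulo $\Ep(K)$, and then varying $t\in K$ to conclude that the full short root subgroup data needed to recover $x_\alpha(\mu)$ lies in $H$; this requires that the relevant structure constants be units in $K$ or otherwise manageable, and it is here that the argument must be carried out with care rather than by a routine appeal to the formulas.
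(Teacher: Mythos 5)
Your extraction of the short factor is sound and is essentially the paper's own first step: choose a short root $\gamma$ with $\gamma+\alpha$ a short root and $\gamma+\beta$ not a root, note that $x_\gamma(1)^{-1}\cdot x_\gamma(1)^g\in H$ (one factor lies in $\Ep(K)\le H$, the other in $\Ep(K)^g\le H$), and since $X_\beta$ commutes with both $X_\gamma$ and $X_\alpha$ this element collapses to a single root element $x_{\gamma+\alpha}(\pm\mu)$; conjugating by Weyl group elements of $\Ep(K)$ then gives $x_\alpha(\mu)\in H$. Your anticipated obstacle about structure constants is not an issue: for two short roots at angle $2\pi/3$ the structure constant is $\pm1$, so taking $t=1$ suffices and no invertibility in $K$ is needed, nor any variation of $t$.

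The genuine gap is the long factor. You conclude with $x_\beta(\lambda)=x_\alpha(\mu)^{-1}g\in H$, which presupposes $g\in H$; but the hypothesis of the lemma is only $\Ep(K)^g\le H$, and the lemma is invoked later in the paper (e.g.\ in the proof of Lemma~\ref{InUniRad}) precisely in situations where $g$ itself is not known to lie in $H$. You flagged this difficulty in a parenthetical but offered no repair, so as written your argument only covers the special case $g\in H$. The paper's repair is a second, separate computation: since $x_\alpha(\mu)\in H$ and since $g$ commutes with $x_\alpha(\mu)$ (both factors of $g$ do), one gets $\Ep(K)^{x_\beta(\lambda)}\le H$; then one chooses a short root $\delta$ with $\beta+\delta$ a root, so that $x_\delta(1)^{-1}\cdot x_\delta(1)^{x_\beta(\lambda)}\in H$ equals, by the Chevalley commutator formula, $x_{\delta+\beta}(\pm\lambda)\,x_{2\delta+\beta}(\pm\lambda)$ --- again a product of a short and a long root element whose roots do not sum to a root, but one that now \emph{genuinely lies in} $H$. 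Re-applying the first-paragraph argument to this element (legitimate, since an element of $H$ conjugates $\Ep(K)$ into $H$) yields $x_{\delta+\beta}(\pm\lambda)\in H$ and hence $x_{2\delta+\beta}(\pm\lambda)\in H$, and transitivity of the Weyl group on long roots finishes the proof. Without some such bootstrap, your approach cannot recover $x_\beta(\lambda)$ under the stated hypothesis.
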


\begin{proof}
Since $n\ge3$, there exists a short root $\gamma$ such that $\gamma+\alpha$
is a short root and $\gamma+\beta$ is not a root. Then, $X_\beta(A)$ commutes with
$X_\alpha(A)$ and $X_\gamma(A)$, hence $[g,x_\gamma(1)]=x_{\alpha+\gamma}(\pm\mu)\in H$.
Conjugating this element by an appropriate element from the Weyl group over $K$
and taking the inverse if necessary, we get $x_\alpha(\mu)\in H$. It follows that
$\Ep(K)^{x_{\beta}(\lambda)}\le H$.

Now, take a short root $\delta$ such that $\beta+\delta$ is a root. Then,
$[x_\delta(1),x_\beta(\lambda)]=
x_{\delta+\beta}(\pm\lambda)x_{2\delta+\beta}(\pm\lambda)\in H$.
Notice that the root $\delta+\beta$ is short whereas $2\delta+\beta$ is long.
As in the first paragraph of the proof one concludes that
$x_{\delta+\beta}(\pm\lambda)\in H$, hence $x_{2\delta+\beta}(\pm\lambda)\in H$.
Again, using the action of the Weyl group and taking inverse if necessary, one
shows that $x_{\beta}(\lambda)\in H$.
\end{proof}

Put $P_\alpha(H)=\{t\in A\,|\,x_\alpha(t)\in H\}$.
Since the Weyl group acts transitively on the set of roots of the same length,
it is easy to see that $P_\alpha(H)=P_\beta(H)$ if $|\alpha|=|\beta|$.
Let $R=R_H=P_\alpha(H)$ for any short root $\alpha$, and let
$\Lambda=\Lambda_H=P_\beta(H)$ for any long root $\beta$.

\begin{lem}\label{FormIdeal}
With the above notation $(R,\Lambda)$ is a form ring and
$K\subseteq \Lambda\subseteq R\subseteq A$.
\end{lem}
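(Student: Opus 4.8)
The plan is to check the defining properties of a form ring one at a time, extracting each from a suitable instance of the Chevalley commutator formula together with the $\mathbb G_a$-law $x_\alpha(s)x_\alpha(t)=x_\alpha(s+t)$. The inclusions $K\subseteq R$ and $K\subseteq\Lambda$ are immediate, since $\Ep(K)\le H$ contains $x_\alpha(\xi)$ for every short root $\alpha$ (giving $\xi\in R$) and every long root $\beta$ (giving $\xi\in\Lambda$) whenever $\xi\in K$. That $R$ and $\Lambda$ are additive subgroups of $A$ follows at once from the $\mathbb G_a$-law, from $x_\alpha(0)=e\in H$, and from $x_\alpha(-t)=x_\alpha(t)^{-1}\in H$. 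To see that $R$ is closed under multiplication, and hence a subring (recall $1\in K\subseteq R$), I would use $n\ge3$ to choose the short roots $\eps_1-\eps_2$ and $\eps_2-\eps_3$, which meet at angle $2\pi/3$ and have short sum $\eps_1-\eps_3$; for $s,t\in R$ the formula $[x_{\eps_1-\eps_2}(s),x_{\eps_2-\eps_3}(t)]=x_{\eps_1-\eps_3}(\pm st)$ shows $x_{\eps_1-\eps_3}(\pm st)\in H$, so $st\in R$.

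Next I would verify $2R\subseteq\Lambda$. The short roots $\eps_1+\eps_2$ and $\eps_1-\eps_2$ are orthogonal and have long sum $2\eps_1$, so the right-angle commutator formula gives $[x_{\eps_1+\eps_2}(s),x_{\eps_1-\eps_2}(t)]=x_{2\eps_1}(\pm2st)\in H$ for $s,t\in R$. Taking $s=1$ yields $2t\in\Lambda$ for every $t\in R$, as required.

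The remaining two conditions --- that $\Lambda\subseteq R$ and that $\Lambda$ is closed under multiplication by squares --- are the heart of the matter, and I would obtain both from a single computation. Using the $3\pi/4$ formula for the long root $2\eps_1$ and the short root $\eps_2-\eps_1$,
$$
[x_{2\eps_1}(\lambda),x_{\eps_2-\eps_1}(\mu)]=x_{\eps_1+\eps_2}(\pm\lambda\mu)\,x_{2\eps_2}(\pm\lambda\mu^2),
$$
which lies in $H$ whenever $\lambda\in\Lambda$ and $\mu\in R$. The key observation is that the short root $\eps_1+\eps_2$ and the long root $2\eps_2$ have a non-root sum $\eps_1+3\eps_2$, so the uncoupling Lemma~\ref{RightSide} applies and places each factor in $H$ separately. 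Taking $\mu=1$ then gives $x_{\eps_1+\eps_2}(\pm\lambda)\in H$, hence $\lambda\in R$ and $\Lambda\subseteq R$; while for arbitrary $\mu\in R$ the long factor gives $x_{2\eps_2}(\pm\lambda\mu^2)\in H$, i.e. $\lambda\mu^2\in\Lambda$. I expect this last step to be the main obstacle: the commutator by itself only certifies that the \emph{product} of the two root elements lies in $H$, and it is precisely Lemma~\ref{RightSide} (and with it the hypothesis $n\ge3$) that lets the short and long parts be separated, after which all the form-ring axioms are in hand.
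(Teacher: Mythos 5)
Your proof is correct and follows essentially the same route as the paper's: closure of $R$ under multiplication via the $2\pi/3$ commutator formula, the inclusion $2R\subseteq\Lambda$ via orthogonal short roots, and both $\Lambda\subseteq R$ and closure under squares extracted from the $3\pi/4$ formula $[x_{2\eps_1}(\lambda),x_{\eps_2-\eps_1}(\mu)]=x_{\eps_1+\eps_2}(\pm\lambda\mu)\,x_{2\eps_2}(\pm\lambda\mu^2)$ combined with the uncoupling Lemma~\ref{RightSide}. The only difference is that you work with explicit roots where the paper argues with generic ones, and you spell out the additive-subgroup and $K$-inclusion checks that the paper leaves as ``clear.''
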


\begin{proof}
Clearly, $P_\alpha(H)$ is an additive subgroup of $A$.
Since $n\ge3$, there are two short roots $\alpha,\alpha'$ such that
$\alpha+\alpha'$ also is short. The commutator formula
$$
[x_\alpha(\lambda),x_{\alpha'}(\mu)]=x_{\alpha+\alpha'}(\pm\lambda\mu)
$$
shows that $R$ is a ring.

Now, let $\alpha,\alpha'$ be short roots which are orthogonal in $V_n$ and such that
$\beta=\alpha+\alpha'\in\Phi$ is a long root. Then
$$
[x_\alpha(\mu),x_{\alpha'}(1)]=x_{\beta}(\pm2\mu).
$$
If $\mu\in R$, then this element belongs to $H$. This proves that
$2R\subseteq\Lambda$. Finally, we show that $\Lambda$ is closed under multiplication
by squares in $R$. Let
$$
g=[x_\beta(\lambda),x_{-\alpha}(\mu)]=
x_{\alpha'}(\pm\lambda\mu)x_{\beta-2\alpha}(\pm\lambda\mu^2)
$$
If $\lambda\in\Lambda$ and $\mu=1$, then $g\in H$, and Lemma~\ref{RightSide} shows
that $x_{\alpha'}(\pm\lambda)\in H$. Therefore, $\Lambda\subseteq R$.
On the other hand, if $\mu\in R$ and $\lambda\in\Lambda$, then $g$ also lies in $H$,
and by Lemma~\ref{RightSide} we have $x_{\beta-2\alpha}(\pm\lambda\mu^2)\in H$.
This shows that $\Lambda$ is stable under multiplication by squares of elements of $R$.
\end{proof}

If $H$ is a subgroup of $\Sp(A)$ containing $\Ep(K)$,
then $(R_H,\Lambda_H)$ is called the \textit{form ring associated with} $H$.

\section{The normalizer}\label{top}

Let $(R,\Lambda)$ be a form subring of a ring $A$ such that $1\in\Lambda$.
In this section we develop properties of the normalizer
$N_A(R,\Lambda)$ of the group $\Ep(R,\Lambda)$ in $\Sp(A)$ and prove Theorem~\ref{nilpotent}.
Here we do not assume that $n\ge3$.
By a result of Bak and Vavilov~\cite[Theorem~1.1]{BakVavNormal}
we know that $\Ep(R,\Lambda)$ is normal in
$\Sp(R,\Lambda)$, thus $\Sp(R,\Lambda)\le N_A(R,\Lambda)$.
First, we show that the quotient $N_A(R,\Lambda)/\Sp(R,\Lambda)$ is abelian.

\begin{lem}\label{NormalizerElem}
Let $g\in\Sp(A)$.
If\/ $\Ep(R,\Lambda)^g\le \GL_{2n}(R)$ then
$g_{ij}g_{kl}\in R$ for all $i,j,k,l\in I$.
\end{lem}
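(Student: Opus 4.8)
The plan is to confront the hypothesis with the generators of $\Ep(R,\Lambda)$ and to read off membership in $R$ entry by entry. The first thing I would record is how the symplectic condition controls the inverse: from $\T gFg=F$ one gets $g^{-1}=F^{-1}\T gF$, so that, up to a sign, every entry of $g^{-1}$ is an entry of $g$ (concretely $g'_{kl}=\sign(k)\sign(l)\,g_{-l,-k}$). Hence every product of the shape ``entry of $g^{-1}$ times entry of $g$'' that appears below is, up to sign, an honest product $g_{ij}g_{kl}$, which is exactly what we must show lies in $R$. With this in place, it suffices to compute the entries of $g^{-1}Xg$ for $X$ ranging over the generators of $\Ep(R,\Lambda)$ and to observe that these entries lie in $R$.

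Since $1\in\Lambda$, the long root elements $x_{2\eps_k}(1)=e+e_{k,-k}$ and $x_{-2\eps_k}(1)=e+e_{-k,k}$ lie in $\Ep(R,\Lambda)$, and $g^{-1}x_{2\eps_k}(1)g=e+g^{-1}e_{k,-k}g$ is a rank-one perturbation of $e$ whose $(a,b)$ entry equals $g'_{ak}g_{-k,b}$. As this matrix lies in $\GL_{2n}(R)$, all these products are in $R$; converting the inverse entry by the formula above turns $g'_{ak}g_{-k,b}$ into a sign times a product of two entries lying in the single row $-k$, and letting $a,b,k$ vary (and using the negative long roots to reach the remaining rows) shows that every product $g_{m,s}g_{m,t}$ of two entries of $g$ in a common row lies in $R$. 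For entries in two different rows I would conjugate the short root elements $x_{\eps_i\pm\eps_j}(1)$: being two-term transvections forced by the form, they yield after conversion the \emph{symmetric} combinations $g_{m,s}g_{m',t}+g_{m,t}g_{m',s}\in R$ whenever $m'\neq\pm m$, while conjugating the long root Weyl element $w_k=x_{2\eps_k}(1)x_{-2\eps_k}(-1)x_{2\eps_k}(1)$ and subtracting the already known same-row products gives the \emph{antisymmetric} combination $g_{k,s}g_{-k,t}-g_{k,t}g_{-k,s}\in R$ for the remaining pairs $(k,-k)$.

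The crux, and the step I expect to be the main obstacle, is the passage from these symmetric and antisymmetric two-entry combinations to the individual products $g_{m,s}g_{m',t}$, the difficulty being that one is not allowed to divide by $2$ (this section does not assume $2=0$). Setting $x=g_{m,s}g_{m',t}$ and $y=g_{m,t}g_{m',s}$, the same-row step already yields $x^2=(g_{m,s}^2)(g_{m',t}^2)\in R$ and $xy=(g_{m,s}g_{m,t})(g_{m',s}g_{m',t})\in R$, whence $x(x+y)=x^2+xy\in R$ and likewise $x(x-y)\in R$; combined with $x+y\in R$ (respectively $x-y\in R$) this shows that $x$ is integral of degree two over $R$ and that $R$ already contains the product of $x$ with the governing combination. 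Upgrading this to $x\in R$ is where the real work lies. I would attack it by exploiting the extra room available for $n\ge2$ to realize a given cross product through several different root-pair relations at once, together with the quadratic relations among the entries of $g$ coming from $\T gFg=F$, so as to reduce to a configuration in which the governing combination is normalized and each individual product is thereby pinned down inside $R$.
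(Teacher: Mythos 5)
Your first two steps are sound and overlap with part of the paper's argument (the long-root conjugates do give all same-row products, since $T_{k,-k}(1)-e$ is the single matrix unit $e_{k,-k}$), but the proposal stalls exactly where you say it does, and the remedy you sketch does not close the gap. Conjugating individual short-root generators (or Weyl elements, which only produce further two-term blocks) can never yield more than the combinations $g_{m,s}g_{m',t}\pm g_{m,t}g_{m',s}$, and your integrality observation cannot be upgraded to membership: the conditions $x+y\in R$, $x^2\in R$, $xy\in R$ do not imply $x\in R$ in general (take $R=\Z\subset A=\Z[\sqrt2\,]$, $x=\sqrt2$, $y=-\sqrt2$), so genuinely new input is required, and the final paragraph of your proposal is a plan rather than an argument.

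The idea you are missing is that the hypothesis applies to every element of $\Ep(R,\Lambda)$, not just to the generators, and that conjugation by $g$ is $R$-linear: the set of $R$-linear combinations $\sum_m\xi_m a^{(m)}$ with $\xi_m\in R$ and $a^{(m)}\in\Ep(R,\Lambda)$ is carried by $x\mapsto g^{-1}xg$ into $\M_{2n}(R)$, and this set is closed under multiplication because $\Ep(R,\Lambda)$ is a group. The paper exploits this to isolate individual matrix units rather than two-term combinations: for $j\ne k$ and $i\ne\pm j,\pm k$ one has $(T_{ji}(1)-e)(T_{ik}(1)-e)=e_{jk}$, because on multiplying out $(e_{ji}-e_{-i,-j})(e_{ik}-e_{-k,-i})$ all cross terms vanish (the relevant row and column indices never match), and then $e_{jj}=e_{jk}e_{kj}$ handles the diagonal units. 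Consequently $g^{-1}e_{jk}g\in\M_{2n}(R)$, i.e.\ $g'_{ij}g_{kl}\in R$ for \emph{all} $i,j,k,l$, and your own conversion $g'_{ij}=\pm g_{-j,-i}$ finishes the proof. In short, the multiplicative trick that cancels the unwanted transvection terms entirely replaces the passage from symmetric combinations to individual products that blocks your route; without it, the lemma as stated remains unproved.
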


\begin{proof}
To begin we express a matrix unit $e_{jk}$ as a linear combination $\sum\xi_m a^{(m)}$
for some  $\xi_m\in R$ and $a^{(m)}\in\Ep(R,\Lambda)$. Note that the set of such linear combinations
is closed under multiplication.
Suppose $j\ne k$ and let $i\ne\pm j,\pm k$. Then $e_{jk}=(T_{ji}(1)-e)(T_{ik}(1)-e)$ and
$e_{jj}=e_{jk}e_{kj}$. It follows that if $\Ep(R,\Lambda)^g\le \GL_{2n}(R)$, then
$g^{-1}e_{jk}g\in\M_n(R)$. Thus $g'_{ij}g_{kl}\in R$ for all $i,j,k,l\in I$
(recall that $g'_{ij}=(g^{-1})_{ij}$).
The conclusion of the lemma follows since the entries of $g^{-1}$ coincide with the entries of
$g$ up to sign and a permutation.
\end{proof}

The next proposition describes the normalizer in the case $A=R$.

\begin{prop}\label{Normalizer for R=A}
$N_R(R,\Lambda)=\Sp(R,\Lambda)$.
\end{prop}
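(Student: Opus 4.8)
The plan is to prove the two inclusions of the asserted equality separately. The inclusion $\Sp(R,\Lambda)\subseteq N_R(R,\Lambda)$ is immediate: by the Bak--Vavilov normality theorem quoted above, $\Ep(R,\Lambda)$ is normal in $\Sp(R,\Lambda)$, so every element of $\Sp(R,\Lambda)$ normalizes $\Ep(R,\Lambda)$. The substance of the proposition is therefore the reverse inclusion $N_R(R,\Lambda)\subseteq\Sp(R,\Lambda)$, and I would phrase everything in terms of the quadratic form defining $\Sp(R,\Lambda)$. Writing $q(v)=\sum_{i=1}^{n}v_iv_{-i}$ for the form attached to $\left(\begin{smallmatrix}0&J\\0&0\end{smallmatrix}\right)$, membership $g\in\Sp(R,\Lambda)$ means exactly that $q(gv)\equiv q(v)\pmod\Lambda$ for all $v\in R^{2n}$.

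So fix $g\in N_R(R,\Lambda)\subseteq\Sp(R)$ and consider the pulled-back form $\phi(v)=q(gv)$. For every $h\in\Ep(R,\Lambda)$ the conjugate $ghg^{-1}$ again lies in $\Ep(R,\Lambda)\subseteq\Sp(R,\Lambda)$, hence preserves $q$ modulo $\Lambda$: one has $q(ghg^{-1}w)\equiv q(w)$ for all $w$. Setting $w=gv$ gives $q(ghv)\equiv q(gv)$, i.e.\ $\phi(hv)\equiv\phi(v)$, so $\phi$ is $\Ep(R,\Lambda)$-invariant modulo $\Lambda$. Since $q$ is itself $\Ep(R,\Lambda)$-invariant, so is $\psi=\phi-q$, and the goal becomes to prove $\psi\equiv0$, which is equivalent to $g\in\Sp(R,\Lambda)$.

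The key structural observation is that $\psi$ is a \emph{diagonal} form. Because $g$ preserves the bilinear form $B(v,w)=\T{v}Fw$ exactly, while the polarization $b_q$ of $q$ satisfies $b_q\equiv B\pmod{2R}$ with $2R\subseteq\Lambda$, the polarizations of $\phi$ and $q$ agree modulo $\Lambda$; thus $\psi$ has vanishing polarization, is therefore additive modulo $\Lambda$, and satisfies $\psi(rv)=r^2\psi(v)$. Consequently $\psi(v)\equiv\sum_{i\in I}\kappa_iv_i^2$ with $\kappa_i=q(ge_i)$ (here $e_i$ is the standard basis vector indexed by $i\in I$, and $q(e_i)=0$), a sum that is well defined modulo $\Lambda$ precisely because $\Lambda$ is closed under multiplication by squares. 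It remains to show that every $\kappa_i$ lies in $\Lambda$.

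This last step I would extract from invariance under the short root elements. Substituting $h=T_{ij}(\mu)=e+\mu e_{ij}-\mu e_{-j,-i}$ (with $\mu\in R$ and $i\ne j$, so that $h\in\Ep(R,\Lambda)$ for every $\mu\in R$) into $\psi(hv)\equiv\psi(v)$ and discarding the cross terms, which lie in $2R\subseteq\Lambda$, leaves $\kappa_i\mu^2v_j^2+\kappa_{-j}\mu^2v_{-i}^2\in\Lambda$ for all $v$ and all $\mu$. Specializing $v=e_j,\ \mu=1$ gives $\kappa_i\in\Lambda$, and $v=e_{-i},\ \mu=1$ gives $\kappa_{-j}\in\Lambda$; letting $i,j$ range over $1\le i\ne j\le n$ (possible as $n\ge2$) exhausts every index of $I$. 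Hence $\psi\equiv0$ and $g\in\Sp(R,\Lambda)$. I expect the main obstacle to be recognizing that the long root elements carry no information here: since $1\in\Lambda$ forces $R^2\subseteq\Lambda$, conjugating $x_{2\eps_i}(\lambda)$ produces only conditions that hold automatically, so one is forced onto the short root elements, and the bookkeeping of which coefficient $\kappa_i$ each $T_{ij}$ pins down is what makes the argument close.
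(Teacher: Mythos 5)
Your proof is correct, but it takes a genuinely different route from the paper. The paper's proof is a block-matrix computation: it conjugates the single element $h=\left(\begin{smallmatrix}e&J\\0&e\end{smallmatrix}\right)$ (a product of long root elements, which lies in $\Ep(R,\Lambda)$ only because of the standing hypothesis $1\in\Lambda$ of Section~4), reads off from Lemma~\ref{BakSp} that the blocks of $ghg^{-1}$ satisfy $(cJ)\circ(c^*a),\,(aJ)\circ(c^*a)\in\bar\M_n(R,\Lambda)$, and then uses the $\M_n(R)$-module structure of $\bar\M_n(R,\Lambda)$ (Lemma~\ref{barM}) together with the relation $d^*a-b^*c=e$ to conclude $c^*a,\,d^*b\in\M_n(R,\Lambda)$. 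You instead work directly with the defining $R/\Lambda$-valued quadratic form: you show the defect $\psi=q\circ g-q$ has polarization in $2R\subseteq\Lambda$ (because $g$ preserves $F$ exactly), hence is a diagonal form $\sum_k\kappa_k v_k^2$ modulo $\Lambda$, and you kill the coefficients $\kappa_k$ by conjugating only the \emph{short} root elements $T_{ij}(\mu)$, $1\le i\ne j\le n$. Your approach buys two things: it avoids Lemmas~\ref{BakSp} and~\ref{barM} entirely, so it is more self-contained; and since short root elements lie in $\Ep(R,\Lambda)$ for every form parameter, it does not use $1\in\Lambda$ at all, so it proves the proposition for arbitrary $\Lambda$, whereas the paper's argument as written needs that hypothesis. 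What the paper's approach buys is brevity given machinery that is needed elsewhere in the article anyway, and it stays in the matrix calculus used throughout. Two cosmetic remarks: your formula $T_{ij}(\mu)=e+\mu e_{ij}-\mu e_{-j,-i}$ is the short root element only when $i\ne\pm j$ (automatic once you restrict to $1\le i\ne j\le n$, as you do); and the phrase about the sum $\sum_i\kappa_i v_i^2$ being ``well defined'' should say that its class modulo $\Lambda$ depends only on the classes of the $\kappa_i$, which is where closure of $\Lambda$ under multiplication by squares enters.
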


\begin{proof}
Let $g=\left(\begin{smallmatrix}a&b\\c&d\end{smallmatrix}\right)\in N_R(R,\Lambda)$.
Since $g\in\Sp(R)$, then $d^*a-b^*c=e$.
We have to prove that $c^*a,\,d^*b\in M_n(R,\Lambda)$.
Since
$h=\left(\begin{smallmatrix}e&J\\0&e\end{smallmatrix}\right)\in\Ep(R,\Lambda)$,
we have
$$
f=ghg^{-1}=\begin{pmatrix}e-aJc^* & aJa^*  \\
                        -cJc^*  & e+cJa^*\end{pmatrix}\in\Ep(R,\Lambda)
$$

It follows that the matrices $-(cJc^*)^*(e-aJc^*)$ and $(e+cJa^*)^*(aJa^*)$ belong
to $M_n(R,\Lambda)$. Since $cJc^*$ and $aJa^*$ are automatically in
$M_n(R,\Lambda)$, we have that $cJc^*aJc^*,\,aJc^*aJa^*\in M_n(R,\Lambda)$.
Modulo $\Min(R)$ we can write
$(cJ)\circ (c^*a),\,(aJ)\circ (c^*a)\in \bar M_n(R,\Lambda)$.
By Lemma~\ref{barM} $\bar M_n(R,\Lambda)$ is an $M_n(R)$ module, therefore
\begin{multline*}
c^*a+\Min(R)=\bigl(J(d^*a-b^*c)J\bigr)\circ (c^*a)=\\
(Jd^*)\circ\bigl((aJ)\circ(c^*a)\bigr)-
(Jb^*)\circ\bigl((cJ)\circ(c^*a)\bigr)\in \bar M_n(R,\Lambda).
\end{multline*}
The proof that $d^*b\in M_n(R,\Lambda)$ is essentially the same.
\end{proof}

\begin{cor}\label{NormComm}
$[N_A(R,\Lambda),N_A(R,\Lambda)]\le\Sp(R,\Lambda)$.
\end{cor}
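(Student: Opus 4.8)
The plan is to show that every generator $[g,h]$ of the commutator subgroup already lies in $\Sp(R,\Lambda)$; since $\Sp(R,\Lambda)$ is a subgroup, this gives the inclusion for $[N_A(R,\Lambda),N_A(R,\Lambda)]$ at once. Fix $g,h\in N_A(R,\Lambda)$. Because $N_A(R,\Lambda)$ is a group, $[g,h]$ again normalizes $\Ep(R,\Lambda)$, so it suffices to prove $[g,h]\in\Sp(R)$. Indeed, once this is known we have $[g,h]\in\Sp(R)\cap N_A(R,\Lambda)$, which by definition is the normalizer $N_R(R,\Lambda)$ of $\Ep(R,\Lambda)$ in $\Sp(R)$, and the latter equals $\Sp(R,\Lambda)$ by Proposition~\ref{Normalizer for R=A}.

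To see that $[g,h]$ has entries in $R$, I would expand the $(i,m)$-entry of $ghg^{-1}h^{-1}$ as
$$
[g,h]_{im}=\sum_{j,k,l}g_{ij}\,h_{jk}\,g'_{kl}\,h'_{lm},
$$
a sum of products of four matrix entries, one each from $g$, $h$, $g^{-1}$ and $h^{-1}$. Since both $g$ and $h$ normalize $\Ep(R,\Lambda)\le\GL_{2n}(R)$, Lemma~\ref{NormalizerElem} yields $g_{ij}g_{kl}\in R$ and $h_{ij}h_{kl}\in R$ for all indices; recalling that the entries of $g^{-1}$ (resp.\ $h^{-1}$) coincide with those of $g$ (resp.\ $h$) up to sign and a permutation, the same conclusion holds when one or both factors are replaced by primed entries. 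Using commutativity of $A$, I would regroup each summand as $(g_{ij}g'_{kl})\,(h_{jk}h'_{lm})$, a product of two elements of $R$ and hence an element of $R$. Thus every entry of $[g,h]$ lies in $R$, and since $[g,h]\in\Sp(A)$ we conclude $[g,h]\in\Sp(R)$, completing the reduction.

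The point one might initially fear to be the main obstacle is that Lemma~\ref{NormalizerElem} controls only products of two entries drawn from a \emph{single} normalizing matrix, whereas the commutator interleaves entries of $g$, $h$ and their inverses. The resolution is precisely the regrouping above: the four-fold product splits canonically into a $g$-pair and an $h$-pair, each governed by the lemma, so no genuinely mixed product $g_{ij}h_{kl}$ ever has to be analyzed. Once $[g,h]\in\Sp(R,\Lambda)$ is established for all generators, the corollary is immediate.
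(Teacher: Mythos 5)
Your proof is correct and follows essentially the same route as the paper: expand the entries of the commutator, regroup each four-fold product into a $g$-pair and an $h$-pair controlled by Lemma~\ref{NormalizerElem}, and conclude via $\Sp(R)\cap N_A(R,\Lambda)=N_R(R,\Lambda)=\Sp(R,\Lambda)$ using Proposition~\ref{Normalizer for R=A}. The only cosmetic difference is the order of factors in the commutator (the paper expands $g^{-1}h^{-1}gh$, so its pairs involve primed entries of $g$ and $h$ directly), which is immaterial since, as you note, entries of the inverse agree with those of the matrix up to sign and a permutation.
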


\begin{proof}
If $g,h\in N_A(R,\Lambda)$ and $f=[g,h]$, then by Lemma~\ref{NormalizerElem}
for all indexes $p,q$ we have
$$
f_{pq}=\sum\limits_{i,j,k=1}^n g'_{pi}h'_{ij}g_{jk}h_{kq}=
\sum\limits_{i,j,k=1}^n (g'_{pi}g_{jk})(h'_{ij}h_{kq})\in R
$$

\noindent
Therefore,
$$
f\in\Sp(R)\cap N_A(R,\Lambda)=N_R(R,\Lambda)=\Sp(R,\Lambda).
$$
\end{proof}

The first 3 items of Theorem~\ref{nilpotent} are already proved.
Our next goal is to show that the main theorem of~\cite{HazDim} implies that
$\Sp(R,\Lambda)/\Ep(R,\Lambda)$ is nilpotent, provided that $R_0$ has finite
Bass--Serre dimension. This will imply the rest of Theorem~\ref{nilpotent}.
Recall that $R_0$ denotes the subring of $R$ generated by all elements $\xi^2$
such that $\xi\in R$.

\begin{lem}\label{semilocal}
If $R_0$ is semilocal and $1\in\Lambda$, then $\Sp(R,\Lambda)=\Ep(R,\Lambda)$.
\end{lem}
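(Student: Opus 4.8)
The plan is to argue by induction on the rank $n$, using the parabolic subgroup $P_1$ with its Levi decomposition $P_1=U_1L_1$, where $L_1\cong\mathbb G_m\times\operatorname{Sp}_{2n-2}$, and to reduce an arbitrary $g\in\Sp(R,\Lambda)$ to the identity by multiplying it on the left and right by elements of $\Ep(R,\Lambda)$. Before starting I record the consequence of the hypothesis $1\in\Lambda$: since $\Lambda$ is closed under multiplication by squares, $\xi^2=\xi^2\cdot1\in\Lambda$ for every $\xi\in R$, and because $\Lambda$ is an additive group while in a commutative ring a product of squares is again a square, this gives $R_0\subseteq\Lambda$. Thus every element of the semilocal ring $R_0$ is admissible as a parameter for the long root elements, which is exactly what is needed to keep the reduction inside $\Ep(R,\Lambda)$.

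First I would clean the first column $v=ge_1$. This is a unimodular vector over $R$ whose coordinates are tied together by the quadratic form defining $\Sp(R,\Lambda)$; in particular its ``diagonal'' self-pairing lies in $\Lambda$. Using the short root elements $T_{ij}(\xi)$ with $\xi\in R$ (which are unconstrained by the form parameter) I would perform symplectic column operations to move a unit into a coordinate and then clear the remaining entries, invoking the semilocality of $R_0$ at the point where the residual obstruction must be killed. The point to watch is that every long root adjustment used in the process has its parameter in $R_0\subseteq\Lambda$, so that all transvections employed genuinely belong to $\Ep(R,\Lambda)$. After this $v=e_1$, and a symmetric cleaning of the first row puts $g$ into $L_1\cap\Sp(R,\Lambda)\cong\mathbb G_m\times\operatorname{Sp}_{2n-2}(R,\Lambda)$.

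It remains to dispose of the torus factor $h_{2\eps_1}(u)=\mathrm{diag}(u,1,\dots,1,u^{-1})$. Here I would use the identity $h_{2\eps_1}(u)=h_{\eps_1-\eps_2}(u)\,h_{2\eps_2}(u)$: the first factor is the short root torus element $h_{\eps_1-\eps_2}(u)$, a product of the elementary elements $x_{\pm(\eps_1-\eps_2)}(\cdot)$, hence lies in $\Ep(R,\Lambda)$ for any unit $u\in R$; the second factor lives entirely in the $\operatorname{Sp}_{2n-2}(R,\Lambda)$ block. Thus the $\mathbb G_m$ factor is absorbed into the smaller symplectic group, and the inductive hypothesis $\operatorname{Sp}_{2n-2}(R,\Lambda)=\operatorname{Ep}_{2n-2}(R,\Lambda)$ completes the step. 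The base of the induction is the rank one group $\operatorname{Sp}_2(R,\Lambda)$, which by Lemma~\ref{BakSp} consists of the matrices $\left(\begin{smallmatrix}a&b\\c&d\end{smallmatrix}\right)$ of determinant $1$ with $ca,\,db\in\Lambda$; this is an $\SL_2$-type assertion that I would settle directly from the semilocality of $R_0$ and the inclusion $R_0\subseteq\Lambda$.

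The hard part will be the column cleaning. Over the ring $R$, which need not be semilocal even when $R_0$ is, one cannot appeal to stable rank $1$ directly, and the symplectic and quadratic constraints couple the coordinates of $v$. The crux is therefore to show that the only genuine obstruction to reducing $v$ to $e_1$ lives in the semilocal ring $R_0$ — equivalently, that the long root parameters forced upon us during the reduction can always be chosen in $R_0\subseteq\Lambda$ — so that semilocality of $R_0$ may be exploited while every elementary factor produced stays inside $\Ep(R,\Lambda)$.
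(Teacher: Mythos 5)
Your plan has a genuine gap --- in fact two. The step you yourself flag as ``the hard part'', namely cleaning the first column of $g$ by symplectic transvections while ``invoking the semilocality of $R_0$ at the point where the residual obstruction must be killed'', is never carried out, and it is not a routine verification: it is the entire content of the lemma. Since only $R_0$, not $R$, is assumed semilocal, you cannot run the usual stable-rank-one column reduction over $R$ directly; and even after one proves that $R$ inherits semilocality (which itself needs an argument, via integrality of $R$ over $R_0$), the reduction must respect the quadratic constraint, i.e.\ every long-root parameter that appears must land in $\Lambda$ and not merely in $R$. That statement is precisely Bak's theorem that symplectic (unitary) groups of semilocal form rings are elementary; it does not fall out of Gaussian elimination, and a proof outline that defers exactly this point has proved nothing. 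For contrast, the paper's proof is a short reduction to that known result: every $\xi\in R$ satisfies the monic equation $x^2-\xi^2=0$ over $R_0$, so $R$ is integral over $R_0$ and is a direct limit of module-finite $R_0$-subalgebras $R'$; each $R'$ is semilocal by Cohen--Seidenberg, so $\operatorname{Sp}(R',\Lambda\cap R')=\operatorname{Ep}(R',\Lambda\cap R')$ by Lemma~4 of~\cite{Bak75}, and the claim follows in the limit because $\operatorname{Sp}$ and $\operatorname{Ep}$ commute with direct limits.

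Second, your induction base is false. For $n=1$ the root system $C_1$ has no short roots, so $\operatorname{Ep}_2(R,\Lambda)$ is generated by $x_{\pm2\eps_1}(\lambda)$ with $\lambda\in\Lambda$ alone. Take $R=\mathbb F_2(t)$, so that $R_0=\mathbb F_2(t^2)$ is a field (hence semilocal), and take $\Lambda=R_0\ni 1$. Every generator of $\operatorname{Ep}_2(R,\Lambda)$ then has entries in the subfield $\mathbb F_2(t^2)$, whence $\operatorname{Ep}_2(R,\Lambda)\le\SL_2\bigl(\mathbb F_2(t^2)\bigr)$; but the matrix $\operatorname{diag}(t,t^{-1})$ lies in $\operatorname{Sp}_2(R,\Lambda)$ by the criterion of Lemma~\ref{BakSp}, since both off-diagonal products vanish and $0\in\Lambda$. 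Hence $\operatorname{Sp}_2(R,\Lambda)\ne\operatorname{Ep}_2(R,\Lambda)$, so the rank-one case is not an ``$\SL_2$-type assertion settled directly from semilocality'' --- it is simply wrong (the paper never meets it, since it works under the standing assumption $n\ge2$). Your induction would therefore have to start at $n=2$, where the full difficulty of the lemma already sits, so nothing is gained by the inductive framework. The only sound ingredients of your sketch are the observation $R_0\subseteq\Lambda$ (which is correct, and is implicitly what makes the form parameter $\Lambda\cap R'$ usable in the paper's argument) and the torus identity $h_{2\eps_1}(u)=h_{\eps_1-\eps_2}(u)\,h_{2\eps_2}(u)$, which is fine but only relevant once the column reduction --- the missing core --- is available.
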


\begin{proof}
Since $R$ is integral over $R_0$, it is a direct limit of $R_0$-subalgebras $R'\subseteq R$
such that $R'$ is module finite and integral over $R_0$. By the first theorem of Cohen--Seidenberg,
each $R'$ is semilocal. Thus by~\cite[Lemma~4]{Bak75} $\Sp(R',\Lambda\cap R')=\Ep(R',\Lambda\cap R')$
for each $R'$. Since $\Sp$ and $\Ep$ commute with direct limits, it follows that
$\Sp(R,\Lambda)=\Ep(R,\Lambda)$.
\end{proof}

\begin{lem}\label{FinGen}
If $R$ is a finitely generated $\Z$-algebra then so is $R_0$.
\end{lem}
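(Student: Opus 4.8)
The plan is to exhibit an explicit finite generating set for $R_0$. Write $R=\Z[x_1,\dots,x_m]$, so that $R$ is spanned as an abelian group by the monomials $x^K=x_1^{k_1}\cdots x_m^{k_m}$. First I would record the two basic closure properties of $R_0$: it contains every square $\xi^2$ by definition, and it contains $2R$, since for each $\xi\in R$ the polarization identity $2\xi=(\xi+1)^2-\xi^2-1$ exhibits $2\xi$ as a $\Z$-combination of squares. In particular $R_0$ contains $x_i^2$ for all $i$ and $2y$ for every monomial $y$.

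Next I would propose the finite set $G$ consisting of the squares $x_1^2,\dots,x_m^2$ together with the elements $2y$, where $y$ ranges over the (finitely many) square-free monomials $x_{j_1}\cdots x_{j_s}$, $1\le j_1<\dots<j_s\le m$. By the previous paragraph $\Z[G]\subseteq R_0$, so it remains to prove $R_0\subseteq\Z[G]$. Since $\Z[G]$ is a subring containing $1$ and $R_0$ is generated as a ring by the squares $\xi^2$, it suffices to check that $\xi^2\in\Z[G]$ for every $\xi\in R$. Expanding $\xi=\sum_K c_Kx^K$ gives $\xi^2=\sum_K c_K^2(x^K)^2+2\sum_{K<L}c_Kc_Lx^{K+L}$ for any fixed ordering of the monomials. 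Each diagonal term $(x^K)^2=\prod_i(x_i^2)^{k_i}$ is a product of the generators $x_i^2$, hence lies in $\Z[G]$. For the off-diagonal terms I would factor an arbitrary monomial as $x^K=\bigl(\prod_i(x_i^2)^{q_i}\bigr)\cdot y$, where $k_i=2q_i+r_i$ with $r_i\in\{0,1\}$ and $y=\prod_{r_i=1}x_i$ is its square-free part; then $2x^K=\bigl(\prod_i(x_i^2)^{q_i}\bigr)\cdot(2y)\in\Z[G]$. Hence $\xi^2\in\Z[G]$, giving $R_0=\Z[G]$, which is a finitely generated $\Z$-algebra.

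The one point requiring care, and the reason a naive guess fails, is the factor of $2$ attached to the cross terms: one cannot drop the square-free monomials $2y$ from $G$ and keep only the elements $2x_i$. For instance $2x_1x_2$ lies in $R_0$ (it equals $(x_1+x_2)^2-x_1^2-x_2^2$), yet in any element of $\Z[x_1^2,\dots,x_m^2,2x_1,\dots,2x_m]$ the coefficient of the monomial $x_1x_2$ is forced to be divisible by $4$, so $2x_1x_2$ is not there. Recognizing that one must therefore adjoin $2y$ for \emph{every} square-free $y$ is the essential step; once the generating set is chosen correctly, the verification above is routine bookkeeping of exponents.
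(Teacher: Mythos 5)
Your proof is correct and takes essentially the same approach as the paper: the generating set you construct (the squares $x_i^2$ of the generators together with the elements $2y$ for $y$ a square-free product of generators) is exactly the paper's set $S_0$, and you use the same polarization identity $2\xi=(\xi+1)^2-\xi^2-1$ to get $2R\subseteq R_0$. The only difference is in the verification that this set generates: the paper argues structurally, via the ring epimorphism $R/2R\to R_0/2R$, $\xi\mapsto\xi^2$, that $R_0/2R$ is generated by the images of the $s^2$, while you expand $\xi^2$ directly and sort the terms into diagonal ones and cross terms lying in $2R$ — the same computation made explicit.
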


\begin{proof}
Since $2\xi=(\xi+1)^2-\xi^2-1$, we have $2R\subseteq R_0$.
Let $S$ be a finite set of generators for $R$ as a $\Z$-algebra.
We set $S_0=\{s^2\mid s\in S\}\cup\{2\prod_{s\in S'}s\mid S'\subseteq S\}$
and denote by $R_1$ the $\Z$-subalgebra generated by $S_0$.
Clearly, $2R\subseteq R_1$. The map $\xi\mapsto\xi^2$ is a ring epimorphism
$R/2R\to R_0/2R$, therefore $R_0/2R$ is generated by the images of generators of
$R$. It follows that $R_0/2R=R_1/2R$, hence $R_0=R_1$ is finitely generated.
\end{proof}

\begin{proof}[Proof of Theorem~\ref{nilpotent}]
Since $N_R(R,\Lambda)=N_A(R,\Lambda)\cap\Sp(R)$, the first statement follows from
Proposition~\ref{Normalizer for R=A}, the second is a particular case of~\cite[Theorem~1.1]{BakVavNormal},
and the third one coincides with Corollary~\ref{NormComm}.

Recall that $\Sp^0(R,\Lambda)=\cap_\ph\operatorname{Ker}\ph$, where $\ph$ ranges over all group homomorphisms
$\Sp(R,\Lambda)\to\Sp(\tilde R,\tilde \Lambda')/\Ep(\tilde R,\tilde \Lambda)$ induced by form ring morphisms
$(R,\Lambda)\to(\tilde R,\tilde\Lambda)$ such that $\tilde R_0$ is semilocal. By Lemma~\ref{semilocal}
$\Sp^0(R,\Lambda)=\Sp(R,\Lambda)$.

Since $R$ is integral over $R_0$, it is a direct limit of  $R_0$-subalgebras $R'\subseteq R$
such that $R'$ is module finite over $R_0$.
By~\cite{HazDim}[Theorem~3.10]
the group $\Sp^0(R',\Lambda\cap R')/\Ep(R',\Lambda\cap R')$ is nilpotent, provided that $R_0$ has
finite Bass--Serre dimension. Since $\Sp$ and $\Ep$ commute with direct limits, this proves (5).

Any commutative ring is a direct limit of finitely generated $\Z$-algebras. Let
$R=\injlim R^{(i)}$, where $i$ ranges over some index set $I$ and each $R^{(i)}$
is a finitely generated $\Z$-algebra. Then $R_0=\injlim R^{(i)}_0$
and $\Sp(R,\Lambda\cap R)/\Ep(R,\Lambda)=\injlim Sp(R^{(i)},\Lambda\cap R^{(i)})/\Ep(R^{(i)},\Lambda\cap R^{(i)})$.
By Lemma~\ref{FinGen} each $R^{(i)}_0$ is a finitely generated $\Z$-algebra. Therefore,
this ring has finite Krull dimension and hence finite Bass--Serre dimension. By~(5) each
group $Sp(R^{(i)},\Lambda\cap R^{(i)})/\Ep(R^{(i)},\Lambda\cap R^{(i)})$ is
nilpotent which implies item~(4).
\end{proof}

The following statement is crucial for the proof of our main theorem.
For Noetherian rings it is almost immediate consequence of Theorem~\ref{nilpotent}.

\begin{lem}\label{normalizer}
Let $g\in\Sp(A)$, $n\ge3$. If\/ $\Ep(R,\Lambda)^g\le N_A(R,\Lambda)$, then
$g\in N_A(R,\Lambda)$. Moreover, $\Ep(R,\Lambda)$ is a characteristic
subgroup of $N_A(R,\Lambda)$.
\end{lem}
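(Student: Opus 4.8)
The plan is to exploit the normal series $\Ep(R,\Lambda)\trianglelefteq\Sp(R,\Lambda)\trianglelefteq N_A(R,\Lambda)$ furnished by Theorem~\ref{nilpotent}, together with the fact that $E:=\Ep(R,\Lambda)$ is perfect (for $n\ge3$ every elementary generator, including the long-root ones with parameters in $\Lambda$, is a product of commutators of elementary generators by the Chevalley commutator formula, exactly as in the computations of Lemma~\ref{FormIdeal}). I first treat the case $\operatorname{BS-dim}R_0<\infty$, where by Theorem~\ref{nilpotent} the quotient $\Sp(R,\Lambda)/E$ is nilpotent and $N/\Sp(R,\Lambda)$ is abelian, so that $N:=N_A(R,\Lambda)$ is solvable modulo $E$, say $D^d(N)\le E$ for some finite $d$. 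Since $E$ is perfect and contained in $N$, we have $E=D^k(E)\le D^k(N)$ for every $k$, whence $E=D^d(N)$. The general case I would recover at the end by writing $A$ as the direct limit of its finitely generated, hence Noetherian, $\Z$-subalgebras and passing to the limit, as in the proof of Theorem~\ref{nilpotent}.

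The characterization $E=D^d(N)$ immediately gives the ``moreover'': the derived series is preserved by every automorphism of $N$, so $E$ is characteristic in $N$. For the first assertion, suppose $E^g\le N$. Then $E^g$ is perfect, being conjugate to $E$, and its image in the solvable quotient $N/E$ is a perfect subgroup of a solvable group of derived length $\le d$, hence trivial; therefore $E^g\le E$, equivalently $E\le E^{g^{-1}}=gEg^{-1}$.

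It remains to upgrade this containment to the equality $E^g=E$ needed for $g\in N$, i.e. to prove $gEg^{-1}\le E$ as well. Because $E^g\le E\le\Sp(R,\Lambda)\le\GL_{2n}(R)$, Lemma~\ref{NormalizerElem} shows that $g_{ij}g_{kl}\in R$ for all indices; since $A$ is commutative and the entries of $g^{-1}$ are, up to sign, entries of $g$, every product $g_{ip}g'_{qj}$ lies in $R$ as well, so each conjugate $ge_{pq}g^{-1}$ has entries in $R$ and hence $gEg^{-1}\le\Sp(A)\cap\GL_{2n}(R)=\Sp(R)$. Once one knows the sharper statement $gEg^{-1}\le\Sp(R,\Lambda)$, the argument of the previous paragraph applies verbatim, now to the perfect subgroup $gEg^{-1}\supseteq E$ of $\Sp(R,\Lambda)$, whose image in the nilpotent quotient $\Sp(R,\Lambda)/E$ is perfect, hence trivial; this yields $gEg^{-1}\le E$ and therefore $E^g=E$.

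Thus the crux, and the step I expect to be the main obstacle, is the promotion of $gEg^{-1}\le\Sp(R)$ to $gEg^{-1}\le\Sp(R,\Lambda)$. Here the perfectness trick is of no help: it controls everything only ``modulo $\Lambda$'', whereas the whole point of the characteristic-$2$ theory is the finer datum of the form parameter. Concretely, I would take each elementary generator $x_\alpha(\xi)$ of $E$ and, writing $gx_\alpha(\xi)g^{-1}=\left(\begin{smallmatrix}a&b\\c&d\end{smallmatrix}\right)$, verify by the matrix criterion of Lemma~\ref{BakSp} that $c^*a,\,d^*b\in\M_n(R,\Lambda)$. The off-form entries are already in $R$ by the previous paragraph, so the content is that the distinguished antidiagonal entries, which are $\xi$ times squares and symmetric products of entries of $g$, fall into $\Lambda$; this should follow from $\Lambda$ containing $2R$ and being closed under multiplication by squares. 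Carrying out this form-parameter bookkeeping is the delicate, genuinely characteristic-$2$ part of the proof; with it in hand, the equality $E^g=E$ together with the direct-limit reduction to the Noetherian case completes the argument.
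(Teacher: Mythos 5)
Your treatment of the finite Bass--Serre dimension case is sound and close in spirit to the paper's (perfectness of $E=\Ep(R,\Lambda)$ for $n\ge3$ against a solvable quotient $N_A(R,\Lambda)/E$), but the two steps you postpone are exactly where the content of the lemma lies, and neither goes through as you describe.

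First, the reduction of the general case to finite Bass--Serre dimension. Writing $A$ as a direct limit of finitely generated subalgebras $A_i$ and setting $R_i=R\cap A_i$, $\Lambda_i=\Lambda\cap A_i$ does not transport the hypothesis: an element whose conjugates normalize $\Ep(R,\Lambda)$ need in no way normalize $\Ep(R_i,\Lambda_i)$, so you cannot conclude $\Ep(R_i,\Lambda_i)^g\le N_{A_i}(R_i,\Lambda_i)$ and run your solvability argument at each finite level. (In the proof of Theorem~\ref{nilpotent} the limit argument works because $\Sp$ and $\Ep$ commute with direct limits; the normalizer and the hypothesis $\Ep(R,\Lambda)^g\le N_A(R,\Lambda)$ do not.) Nor can you argue at the top level: for general $R$, Theorem~\ref{nilpotent} gives only quasi-nilpotence of $\Sp(R,\Lambda)/E$, i.e.\ local nilpotence, and a locally nilpotent group can contain nontrivial perfect subgroups (McLain-type groups are perfect and locally nilpotent), so ``a perfect subgroup of $N/E$ is trivial'' is unavailable. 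The paper replaces your global limit by a pointwise finiteness device, and this is its key idea: first $E^\theta=[E,E]^\theta\le[N_A(R,\Lambda),N_A(R,\Lambda)]\le\Sp(R,\Lambda)$ by Corollary~\ref{NormComm}; then, for a fixed $h\in E$, one chooses a finitely generated $R'$ with $h\in\Ep(R',\Lambda')$, forms the finitely generated ring $R''\subseteq R$ generated over $R'$ by the entries of the images of the generators of $\Ep(R',\Lambda')$ (with $\Lambda''=\Lambda\cap R''$), and applies Lemma~\ref{FinGen} and Theorem~\ref{nilpotent}(5) to $R''$, not to $R$, to get $D^k\Sp(R'',\Lambda'')=\Ep(R'',\Lambda'')$, whence $h^\theta\in\Ep(R'',\Lambda'')\le E$ by perfectness of $\Ep(R',\Lambda')$. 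This device is absent from your proposal and cannot be replaced by the limit you sketch.

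Second, your ``crux'' step --- promoting $gEg^{-1}\le\Sp(R)$ to $gEg^{-1}\le\Sp(R,\Lambda)$ by form-parameter bookkeeping --- is not merely unfinished; it cannot be carried out from the ingredients you cite. In the intended application $2=0$, so $2R\subseteq\Lambda$ says nothing, and the entries of $gx_\alpha(\xi)g^{-1}$ that must land in $\Lambda$ are not ``$\xi$ times squares and symmetric products'': they contain terms linear in $\xi$ (with $\xi$ ranging over all of $R$, not over $\Lambda$) and terms such as $\xi^2(g'_{jk})^2\cdot q$, where $q$ is the value of the symplectic quadratic form on a column of $g$; demanding that these lie in $\Lambda$ is essentially the assertion that $g$ itself respects the form parameter, which is the conclusion you are trying to prove --- the computation is circular. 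In the paper the form-parameter information never comes from entry conditions: it enters only through the normalizer hypothesis, via Proposition~\ref{Normalizer for R=A} and Corollary~\ref{NormComm}, which apply to $E^g$ (contained in $N_A(R,\Lambda)$ by hypothesis) but not to $gEg^{-1}$ (not known to lie there). Indeed the paper never performs your promotion at all: it proves only the invariance $E^\theta\le E$ for every $\theta$ satisfying the hypothesis; applied to an automorphism of $N_A(R,\Lambda)$ and to its inverse, this yields the characteristic-subgroup claim, and in the application to Theorem~\ref{MAIN} the hypothesis holds for $g^{-1}$ as well (since $H$ there is a group), which is how the equality $E^g=E$ is recovered without any matrix computation.
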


\begin{proof}
Let $\theta$ be either an automorphism of $N_A(R,\Lambda)$ or an automorphism of $\Sp(A)$
such that $\Ep(R,\Lambda)^\theta\le N_A(R,\Lambda)$
(we denote by $h^\theta$ the image of an element $h\in N_A(R)$
under the action of $\theta$). Since $n\ge3$, the Chevalley commutator formula (see section~\ref{Sp})
implies that the group $\Ep(R,\Lambda)$ is perfect. By Corollary~\ref{NormComm} we have
\begin{multline*}
\Ep(R,\Lambda)^\theta=[\Ep(R,\Lambda),\Ep(R,\Lambda)]^\theta\le \\
[N_A(R,\Lambda),N_A(R,\Lambda)]\le\Sp(R,\Lambda).
\end{multline*}
We shall prove that $h^\theta\in \Ep(R,\Lambda)$ for any $h\in\Ep(R,\Lambda)$.
Write $h$ as a product of elementary root unipotents
$x_{\alpha_1}(s_1)\cdots x_{\alpha_m}(s_m)$. Let $R'$ denote the $\Z$-subalgebra of $R$ generated by
all $s_i$'s, and let $\Lambda'$ denote the form parameter of $R'$ generated by
those $s_j$ for which $\alpha_j$ is a long root.
Clearly $h\in\Ep(R',\Lambda')$ and $\Ep(R',\Lambda')$ is a finitely generated group.
Let $R''$ denote the $R'$-algebra generated by all entries of the matrices
$y^\theta$, where $y$ ranges over all generators of $\Ep(R',\Lambda')$.
Let $\Lambda''=\Lambda\cap R''$.
The inclusion $\Ep(R,\Lambda)^\theta\le\Sp(R,\Lambda)$ shows that $R''\subseteq R$.
Note that $\Ep(R',\Lambda')^\theta\le\Sp(R'',\Lambda'')$  by the choice of $R''$.

Since $R''$ is a finitely generated $R'$-algebra, it is
a finitely generated $\Z$-algebra. By Lemma~\ref{FinGen} $R''_0$ is a finitely generated
$\Z$-algebra. Therefore, it has finite Krull dimension and hence finite Bass--Serre dimension.
Thus by Theorem~\ref{nilpotent}(5), the $k$th commutator subgroup $D^k\Sp(R'',\Lambda'')$ equals to
$\Ep(R'',\Lambda'')$ for some positive integer $k$.
Now, since $\Ep(R',\Lambda')$ is perfect, it is equal to
$D^k\Ep(R',\Lambda')$.  It follows that
$$
\Ep(R',\Lambda')^\theta=D^k\Ep(R',\Lambda')^\theta\le
D^k \Sp(R'',\Lambda'')=\Ep(R'',\Lambda'').
$$
In particular, $h^\theta\in \Ep(R'',\Lambda'')\le \Ep(R,\Lambda)$.
Thus, $\Ep(R,\Lambda)$ is invariant under $\theta$.

If $\theta$ is an automorphism of $N_A(R)$ this means that
$\Ep(R,\Lambda)$ is a characteristic subgroup of $N_A(R)$.
If $\theta$ is an inner automorphism defined by $g\in G(A)$, then
the statement we proved is the first assertion of the lemma.
\end{proof}

The following straightforward corollary shows that the normalizers of
all subgroups of the sandwich $L\bigl(\Ep(R,\Lambda),N_A(R,\Lambda)\bigr)$
lie in that sandwich.

\begin{cor}
For any $H\le N_A(R,\Lambda)$ containing $\Ep(R,\Lambda)$ its normalizer
is contained in $N_A(R,\Lambda)$.
In particular, the group $N_A(R,\Lambda)$ is self normalizing.
\end{cor}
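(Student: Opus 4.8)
The plan is to reduce everything to Lemma~\ref{normalizer}, which already carries the real content. I would start by fixing an arbitrary element $g\in\Sp(A)$ that normalizes $H$, so that $H^g=H$. The first step is the observation that, since $\Ep(R,\Lambda)\le H$ by hypothesis and conjugation by $g$ leaves $H$ invariant, one has
$$
\Ep(R,\Lambda)^g\le H^g=H\le N_A(R,\Lambda).
$$
Here it is essential that $\Ep(R,\Lambda)$ sits \emph{inside} $H$, so that its $g$-conjugate remains inside $H$, and that $H$ in turn sits inside $N_A(R,\Lambda)$.

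With this chain of inclusions in hand, the first assertion of Lemma~\ref{normalizer} applies directly: from $\Ep(R,\Lambda)^g\le N_A(R,\Lambda)$ it concludes $g\in N_A(R,\Lambda)$. Since $g$ was an arbitrary element of the normalizer of $H$ in $\Sp(A)$, this proves that this normalizer is contained in $N_A(R,\Lambda)$, which is the main statement. For the final clause I would specialize to the choice $H=N_A(R,\Lambda)$, which is legitimate because $\Ep(R,\Lambda)\le N_A(R,\Lambda)$; the statement just established then shows that the normalizer of $N_A(R,\Lambda)$ is contained in $N_A(R,\Lambda)$, and combined with the trivial reverse inclusion this yields that $N_A(R,\Lambda)$ is self normalizing.

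I do not anticipate any genuine obstacle: the corollary is a formal consequence of Lemma~\ref{normalizer}, and the only point deserving attention is the inclusion chain above. All the hard work—establishing that an element conjugating $\Ep(R,\Lambda)$ into $N_A(R,\Lambda)$ must itself lie in $N_A(R,\Lambda)$, which ultimately rests on the nilpotency results of Theorem~\ref{nilpotent} and the perfectness of $\Ep(R,\Lambda)$ for $n\ge3$—is already absorbed into Lemma~\ref{normalizer}, so no further appeal to the hypotheses $n\ge3$ or $2=0$ is required beyond what that lemma supplies.
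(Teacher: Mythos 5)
Your proof is correct and is precisely the argument the paper has in mind: the paper states this corollary without proof, calling it straightforward, and the intended derivation is exactly your reduction $\Ep(R,\Lambda)^g\le H^g=H\le N_A(R,\Lambda)$ followed by Lemma~\ref{normalizer}, with the self-normalizing claim obtained by taking $H=N_A(R,\Lambda)$.
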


\section{Inside a parabolic subgroup}\label{parabolic}

Let $H$ be a subgroup of $\Sp(A)$, normalized by $\Ep(K)$.
Denote by $(R,\Lambda)$ the form ring, associated with $H$. In the proof of the
following lemma we keep using the ordering on the index set $I$, defined in
section~\ref{Sp}. For example, the product $\prod_{j=2}^{-1}$ means that
$j=2,\dots,n,-n,\dots,-1$. We assume that the order of factors agrees with the
ordering on $I$.

\begin{lem}\label{InUniRad}
If $g\in U_1(R)$ and $\Ep(K)^g\in H$, then $g\in\Ep(R,\Lambda)$.
\end{lem}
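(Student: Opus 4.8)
The plan is to exploit the structure of the unipotent radical $U_1$, whose elements are parametrized by a short-root part (the entries $g_{1i}$ for $i\ne 1$) together with a single long-root part (the entry $g_{1,-1}$). Concretely, an element $g\in U_1(R)$ can be written as a product $\prod_{j=2}^{-1} T_{1j}(t_j)$ running over the ordered index set $I$, where each factor $T_{1j}(t_j)$ is an elementary root unipotent attached to the $(1j)$-position. The key observation is that the root $p(1,-1)=2\eps_1$ is long while all other $p(1,j)$ are short; thus the decomposition separates short-root data from the one long-root coordinate. My goal is to show first that the short-root entries $t_j$ lie in $R$ and the long-root entry lies in $\Lambda$, and second that $g$ actually equals the corresponding product of elements of $\Ep(R,\Lambda)$.

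First I would extract the individual factors. Since $\Ep(K)^g\le H$ and $\Ep(K)$ is generated by root unipotents, I would commute $g$ with suitably chosen short-root elements $x_\gamma(1)\in\Ep(K)$ to peel off one coordinate at a time, using the Chevalley commutator formulas recalled in Section~\ref{Sp}. The point is that commuting $g$ with a short-root generator $x_\gamma(1)$ produces, via the commutator relations, elementary root unipotents whose parameters are (up to sign) the entries $t_j$ of $g$; these commutators lie in $H$, so by the definitions $R=R_H$, $\Lambda=\Lambda_H$ I can read off that each short-root parameter lies in $R$ and the long-root parameter lies in $\Lambda$. Here Lemma~\ref{RightSide} is the essential tool whenever a commutator comes out as a product of a short-root and a long-root factor, since it lets me uncouple the two and conclude membership of each factor separately. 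Once every coordinate $t_j$ is known to lie in the correct set ($R$ for short positions, $\Lambda$ for the long position), the matching product $\prod_j x_{p(1,j)}(\pm t_j)$ manifestly lies in $\Ep(R,\Lambda)$ by its very definition.

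Finally I would argue that this product equals $g$ itself. Because $g\in U_1(R)$ and $U_1$ is abelian modulo the relation controlling the long root (the commutator of two short-root elements $T_{1i}$, $T_{1j}$ lands in the long-root subgroup $T_{1,-1}$), the ordered product $\prod_{j=2}^{-1}T_{1j}(t_j)$ reproduces $g$ exactly once the order of factors is fixed to agree with the ordering on $I$; the long-root contribution is absorbed into the single factor $T_{1,-1}$, whose parameter we have already placed in $\Lambda$. I expect the main obstacle to be bookkeeping the signs and the order of multiplication in this decomposition: one must verify that collecting the peeled-off coordinates in the prescribed order genuinely recovers $g$ rather than $g$ times a correction term, and that the long-root entry picked up from commuting short-root factors is consistently the one lying in $\Lambda$. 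This is a careful but routine computation with the commutator formulas; the conceptual content is entirely carried by Lemma~\ref{RightSide} and the definitions of $R_H$ and $\Lambda_H$.
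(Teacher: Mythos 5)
Your proposal is correct and takes essentially the same route as the paper: the paper likewise fixes the ordered decomposition $g=\prod_{j=2}^{-1}T_{1j}(\mu_j)$, extracts the leading coordinate by commutating $g$ with a short-root element $T_{h,i}(-1)\in\Ep(K)$ (with $i$ the successor of the leading index $h$), and uses Lemma~\ref{RightSide} to uncouple short and long factors. The only organizational difference is that the paper packages your ``peel one coordinate at a time'' step as a descending induction on the leading index, stripping off $T_{1h}(-\mu_h)$ at each stage, which disposes of the reassembly bookkeeping you flag in your final paragraph.
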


\begin{proof}
Let $g=\prod_{j=2}^{-1}T_{1j}(\mu_j)$.
We have to prove that $\mu_j\in R$ for any $j\ne -1$ and $\mu_{-1}\in\Lambda$.
Let $h$ be the smallest element from $I$ such that $\mu_j\ne0$.
We proceed by going down induction on $h$.
If $h=-1$, then $g$ consists of a single factor and
there is nothing to prove. If $h=-2$, then the result follows from
Lemma~\ref{RightSide}.

Now, let $h\le-2$. Denote by $i$ the successor of $h$ in $I$.
Then, $[T_{h,i}(-1),g]=T_{1,i}(\mu_h)\prod\limits_{j>i}T_{1,j}(\xi_h)$.
By induction hypothesis $\mu_h\in R$. The element $T_{1h}(-\mu_h)g$ satisfies the
conditions of the lemma. Again by induction hypothesis it belongs to
$\Ep(R,\Lambda)$. Thus, $g\in\Ep(R,\Lambda)$.
\end{proof}

It is known that in a Chevalley group the unipotent radicals of two opposite
standard parabolic subgroups span the elementary group (see e.\,g.~\cite[Lemma~2.1]{StepComput}).
The next lemma shows that this holds for the parabolic subgroup $P_1$ of $\Sp(R,\Lambda)$ as well.

\begin{lem}
The set
$$
\{T_{1i}(\mu),\,T_{i1}(\mu),\,T_{\pm1\,\mp1}(\lambda)\mid i\ne\pm1,\,\mu\in R,\,\lambda\in\Lambda\}
$$
generates the elementary group $\Ep(R,\Lambda)$.
\end{lem}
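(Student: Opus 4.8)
The goal is to show that the set
$$
S=\{T_{1i}(\mu),\,T_{i1}(\mu),\,T_{\pm1\,\mp1}(\lambda)\mid i\ne\pm1,\,\mu\in R,\,\lambda\in\Lambda\}
$$
generates $\Ep(R,\Lambda)$. First I would observe that every element of $S$ is indeed a root unipotent lying in $\Ep(R,\Lambda)$: the elements $T_{1i}(\mu)$ and $T_{i1}(\mu)$ for $i\ne\pm1$ correspond to short roots $\pm\eps_1\pm\eps_j$ or $\eps_1-\eps_j$ (with arbitrary parameter in $R$), while $T_{1,-1}(\lambda)=x_{2\eps_1}(\lambda)$ and $T_{-1,1}(\lambda)=x_{-2\eps_1}(\lambda)$ are the long-root elements and require $\lambda\in\Lambda$. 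Hence $\langle S\rangle\le\Ep(R,\Lambda)$, and the entire content is the reverse inclusion. Let me write $U=\langle S\rangle$; the task is to produce every generator $x_\alpha(\xi)$ of $\Ep(R,\Lambda)$ inside $U$.

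**Key steps.** The generators of $S$ are exactly the root elements $x_\alpha$ for roots $\alpha$ that involve the index $1$, i.e.\ the root subgroups inside $U_1$ (the short and long roots $\eps_1\pm\eps_j$, $2\eps_1$) together with those inside the opposite unipotent radical $U_1^-$ (the roots $-\eps_1\pm\eps_j$, $-2\eps_1$). So $S$ generates $U_1(R)\cdot U_1^-(R,\Lambda)$ in the appropriate sense, and the plan is the standard ``opposite unipotents generate'' argument adapted to the form-parameter setting. Concretely, I would fix an arbitrary generator $x_\gamma(\xi)$ of $\Ep(R,\Lambda)$ where $\gamma$ does \emph{not} involve the index $1$ (the roots involving $1$ are already in $S$), and realize it as a commutator of two elements of $S$ via the Chevalley commutator formula. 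For a short root $\gamma=\eps_i-\eps_j$ with $i,j\ne1$, one writes $\gamma=(\eps_i-\eps_1)+(\eps_1-\eps_j)$, a sum of two short roots each involving $1$, and applies the first commutator relation $[x_\alpha(\lambda),x_\beta(\mu)]=x_{\alpha+\beta}(\pm\lambda\mu)$ to get $x_\gamma(\pm\xi)\in U$. The short roots $\pm\eps_i\pm\eps_j$ not involving $1$ are handled the same way by decomposing them as a sum of two short roots through index $1$. The delicate case is a long root $\gamma=2\eps_k$ with $k\ne1$ and parameter $\lambda\in\Lambda$: here I would decompose $2\eps_k=(\eps_k-\eps_1)+(\eps_k+\eps_1)$ or use the square-sensitive third commutator formula, choosing the decomposition so that the long-root parameter produced lands in $\Lambda$ rather than merely in $R$.

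**The main obstacle.** The crux is precisely the long-root generators $x_{2\eps_k}(\lambda)$, $k\ne1$, with $\lambda\in\Lambda$: I must obtain them from $S$ while respecting the constraint that long-root parameters be confined to $\Lambda$. Using $[x_{\eps_k-\eps_1}(\cdot),x_{\eps_k+\eps_1}(\cdot)]=x_{2\eps_k}(\pm2(\cdot)(\cdot))$ only yields parameters in $2R\subseteq\Lambda$, which is not enough. Instead I expect to commute a long-root element already in $S$, namely $x_{2\eps_1}(\lambda)$ with $\lambda\in\Lambda$, against a short-root element through index $1$: the relevant instance of the third ($\widehat{\alpha\beta}=3\pi/4$) commutator formula converts the Weyl-group action on $x_{2\eps_1}(\lambda)$ into $x_{2\eps_k}(\pm\lambda)$ with the \emph{same} parameter $\lambda\in\Lambda$, since the form parameter is stable under multiplication by squares of elements of $R$ and by the relevant unit. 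Thus I would realize the Weyl reflection carrying $2\eps_1$ to $2\eps_k$ as a product of elements $w_\alpha=x_\alpha(1)x_{-\alpha}(-1)x_\alpha(1)$ for short roots $\alpha$ through index $1$, all of which lie in $U$, and check that conjugating $x_{2\eps_1}(\lambda)$ by this $w$ gives $x_{2\eps_k}(\pm\lambda)\in U$. Verifying that these Weyl elements are genuinely available in $U$ and that conjugation keeps the long-root parameter inside $\Lambda$ is the step requiring care; once it is in place, iterating commutators and Weyl conjugations recovers all remaining generators and yields $\Ep(R,\Lambda)\le U$, completing the proof.
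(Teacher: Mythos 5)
Your proposal is correct, and on the short roots it is exactly the paper's argument: for $i,j\ne\pm1$ the identity $T_{ij}(\mu)=[T_{i1}(\mu),T_{1j}(1)]$ realizes every short root not through index $1$ as a commutator of two generators. Where you diverge is the long-root step, which you rightly single out as the crux (the naive commutator of $x_{\eps_k-\eps_1}$ and $x_{\eps_k+\eps_1}$ only produces parameters in $2R$). The paper handles it with one more application of the commutator formulas it has already displayed in Section~\ref{Sp}: for $\lambda\in\Lambda$, the $\widehat{\alpha\beta}=3\pi/4$ case gives $[T_{-1,1}(\lambda),T_{1i}(1)]=T_{-i,1}(\pm\lambda)\,T_{-i,i}(\pm\lambda)$, and the unwanted short-root factor $T_{-i,1}(\pm\lambda)$ is itself a generator of the set (its parameter need only lie in $R$), so it can be stripped off, leaving the long-root element $T_{-i,i}(\pm\lambda)$ with the parameter still in $\Lambda$. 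You instead transport $x_{2\eps_1}(\lambda)$ to $x_{2\eps_k}(\pm\lambda)$ by conjugating with the Weyl element $w_\alpha=x_\alpha(1)x_{-\alpha}(-1)x_\alpha(1)$, $\alpha=\eps_1-\eps_k$, whose three factors lie in your set $S$; this is valid over any commutative ring by the standard relation $w_\alpha x_\beta(u)w_\alpha^{-1}=x_{s_\alpha(\beta)}(\pm u)$, and the point you flag as needing care is in fact harmless: the conjugate lies in $\langle S\rangle$ automatically, and the computation identifies it as $x_{2\eps_k}(\pm\lambda)$ with $\pm\lambda\in\Lambda$ since $\Lambda$ is an additive subgroup. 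The trade-off: the paper's route is self-contained (only the listed commutator formulas) and takes two lines, whereas yours imports the Weyl-element conjugation relation --- a heavier but standard tool, and one the paper itself uses elsewhere, e.g.\ in the proof of Lemma~\ref{RightSide} --- in exchange for a conceptually uniform picture in which every root element is moved along its Weyl orbit by elements of $S$. Both proofs share the essential insight that the long-root parameter $\lambda\in\Lambda$ must be sourced from the generators $T_{\pm1,\mp1}(\lambda)$ and transported, not manufactured from short-root commutators.
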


\begin{proof}
If $i\ne\pm j,\pm1$ then $T_{ij}(\mu)=[T_{i1}(\mu),T_{1j}(1)]$.
On the other hand, for $\lambda\in\Lambda$ we have
$T_{-ii}(\lambda)=[T_{-11}(\mu),T_{1i}(1)]T_{-i1}(-\lambda)$.
\end{proof}

\begin{lem}\label{InParabolic}
Let $H$ be a subgroup of $\Sp(A)$, containing $\Ep(K)$ and let
$(R,\Lambda)$ be a form ring, associated with $H$.
Suppose that $g$ commutes with a long root subgroup $X_\gamma(K)$
and $\Ep(K)^g\in H$. Then $g\in N_A(R,\Lambda)$.
\end{lem}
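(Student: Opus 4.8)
The plan is to reduce to the root $\gamma=2\eps_1$, to place $g$ in the parabolic $P_1$, and then to force enough entries of $g$ into $R$ so that Lemma~\ref{normalizer} applies. First I would reduce to a single long root: the Weyl group of $C_n$ acts transitively on long roots and its representatives are products of elements $x_\alpha(\pm1)$ with $1\in K$, hence lie in $\Ep(K)\le\Ep(R,\Lambda)$ and normalize both $H$ (which contains $\Ep(K)$) and $N_A(R,\Lambda)$. Conjugating $g$ by such a representative preserves all hypotheses and the associated form ring $(R,\Lambda)$, so I may assume $\gamma=2\eps_1$, i.e.\ that $g$ centralizes $T_{1,-1}(\xi)=e+\xi e_{1,-1}$ for all $\xi\in K$. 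Writing out commutation with $T_{1,-1}(1)$ gives $g_{i1}=0$ for $i\ne1$, $g_{-1,j}=0$ for $j\ne-1$, and $g_{11}=g_{-1,-1}$; thus $g\in P_1(A)$. Combining $g_{11}=g_{-1,-1}$ with the $P_1$-identity $g_{11}=g'_{-1,-1}$ and evaluating $(gg^{-1})_{11}$ and $(g^{-1}g)_{-1,-1}$ (using that $g^{-1}\in P_1(A)$ too) yields $g'_{11}=g_{11}^{-1}$ and $\epsilon^2=1$, where $\epsilon:=g_{11}$.

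The strategy is then to prove that $\epsilon g_{ab}\in R$ for \emph{all} $a,b\in I$. Granting this, $\epsilon^2=1$ gives $g_{ab}g_{cd}=(\epsilon g_{ab})(\epsilon g_{cd})\in R$ for all indices, so every $g$-conjugate of a generator of $\Ep(R,\Lambda)$ has entries in $R$; verifying the form-parameter conditions of Lemma~\ref{BakSp} then places these conjugates in $\Sp(R,\Lambda)=\Sp(R)\cap N_A(R,\Lambda)$ (Theorem~\ref{nilpotent}(1)), which lies in $N_A(R,\Lambda)$, and Lemma~\ref{normalizer} gives $g\in N_A(R,\Lambda)$.

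To produce these entries I would exploit that $g$ normalizes $U_1$. For a short root generator $T_{1a}(1)$ with $a\ne\pm1$ (so $1\in K$) the conjugate $T_{1a}(1)^g$ lies in $U_1(A)\cap H$; the commutator peeling used in the proof of Lemma~\ref{InUniRad}, which only invokes the elements $T_{bc}(\pm1)\in\Ep(K)\le H$ together with the definitions of $R$ and $\Lambda$, applies verbatim to any element of $U_1(A)\cap H$ and shows $T_{1a}(1)^g\in\Ep(R,\Lambda)$. Since $(g^{-1}e_{1a}g)_{1b}=g'_{11}g_{ab}=\epsilon g_{ab}$, reading off its first row gives $\epsilon g_{ab}\in R$ for every $b$ whenever $a\ne\pm1$; the row $a=-1$ is immediate from $g\in P_1$.

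The remaining, and hardest, case is the first row, $\epsilon g_{1b}\in R$, equivalently control of the unipotent-radical part of $g$. Here the $U_1$-generators give nothing, as $g$ centralizes $T_{1,-1}(K)$, so I would instead conjugate the opposite generators $T_{a1}(1)$ ($a\ne\pm1$) and $T_{-1,1}(\lambda)$ ($\lambda\in K$): these lie in $H$, and their conjugates involve precisely the first row of $g$. Using that $g$ centralizes $X_{2\eps_1}(K)$, together with the already controlled rows, I would form suitable commutators and products to project these conjugates back into $U_1(A)\cap H$ and then peel as before, forcing the first-row short entries into $R$ and the long position $(1,-1)$ into $\Lambda$. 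This last point---landing the form-parameter positions in $\Lambda$ rather than merely in $R$---is the main obstacle, and it is exactly where the hypotheses $n\ge3$ (which, as in Lemmas~\ref{RightSide} and~\ref{FormIdeal}, supplies the commutator room to uncouple short and long contributions) and the centralizing of a long root subgroup become indispensable.
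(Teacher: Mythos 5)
Your first half coincides with the paper's proof: the reduction to $\gamma=2\eps_1$, the computation placing $g$ in $P_1(A)$ with $g_{11}=g_{-1,-1}=\mu$ and $\mu^2=1$, and the application of Lemma~\ref{InUniRad} to the conjugates $T_{1i}(1)^g\in U_1(A)\cap H$ to force $\mu g_{ij}\in R$ for $i\ne1$ are all exactly the paper's steps. The genuine gap is your treatment of the first row. You propose to conjugate the opposite generators $T_{a1}(1)$ and $T_{-1,1}(\lambda)$ and then ``form suitable commutators and products to project these conjugates back into $U_1(A)\cap H$ and peel as before,'' but no such projection exists as described: since row $1$ of $g$ is precisely what is uncontrolled, $T_{a1}(1)^g=e+g^{-1}(e_{a1}-e_{-1,-a})g$ has entries of the form $g'_{pa}g_{1q}$ spread over the whole matrix, so it lies in neither $P_1(A)$ nor $U_1(A)$, and the peeling of Lemma~\ref{InUniRad} cannot even be started. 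You yourself flag the $\Lambda$-membership of the long positions as ``the main obstacle'' and then appeal to $n\ge3$ and the centralizing hypothesis without giving an argument; but that obstacle is the heart of the lemma, not a detail. Your endgame has the same problem in disguise: knowing $g_{ab}g_{cd}\in R$ for all indices only puts $\Ep(R,\Lambda)^g$ inside $\Sp(R)$, and to conclude via Theorem~\ref{nilpotent}(1) or Lemma~\ref{normalizer} you must already know these conjugates lie in $N_A(R,\Lambda)$, i.e.\ verify the $\Lambda$-conditions of Lemma~\ref{BakSp} for each conjugate --- which is again the unresolved point.

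The paper's proof avoids controlling row $1$ entrywise by a different device: the Levi decomposition $g=ab$ with $a\in L_1(A)$, $b\in U_1(A)$. The entries of $a$ are, up to the unit $\mu$, exactly the already-controlled entries $g_{ij}$ with $i,j\ne\pm1$, so $T_{1i}(1)^a$ and $T_{i1}(1)^a$ land in $\Ep(R,\Lambda)$; moreover $a$ commutes with $X_{\pm2\eps_1}$ because $\mu^2=1$, so by the generation lemma just before Lemma~\ref{InParabolic} (the one producing $\Ep(R,\Lambda)$ from $T_{1i}(\mu)$, $T_{i1}(\mu)$, $T_{\pm1\,\mp1}(\lambda)$) the factor $a$ normalizes $\Ep(R,\Lambda)$. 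This converts the hypothesis on $g$ into one on $b$: $\Ep(K)^b=(\Ep(K)^{a^{-1}})^g\le\Ep(R,\Lambda)^g\le H$, so Lemma~\ref{InUniRad} applies to $b$ itself (not to conjugates), giving $b\in\Ep(R,\Lambda)$ --- including the $\Lambda$-condition at the long position --- and hence $g=ab\in N_A(R,\Lambda)$. This decomposition, which packages the entire first row of $g$ into a single element of $U_1(A)$ to which Lemma~\ref{InUniRad} is applied directly, is the idea missing from your proposal.
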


\begin{proof}
Without loss of generality we may assume that
$\gamma=2\eps_1$ is the maximal root. Then $g$ belongs to the standard parabolic subgroup
$P_1$, corresponding to the simple root $\alpha_1=\eps_1-\eps_2$,
in other words, $g_{i1}=g_{-1i}=g_{-11}=0$ for all
$i\ne\pm1$. Moreover, $g_{11}=g_{-1-1}=g'_{11}=g'_{-1-1}=\mu$, where $\mu^2=1$.
Then $g=ab$ for some $b\in U_1(A)$ and $a\in L_1(A)$ ($a$ and $b$
are not necessarily in $H$).
For any $d\in U_1(K)$ the element $d^g$ belongs to  $H\cap U_1(A)$.
By Lemma~\ref{InUniRad}, $d^g\in \Ep(R,\Lambda)$.
If $d=T_{1i}(1)$, then the above inclusion implies that $\mu g_{ij}\in R$
for all $i\ne1$ and all $j\in I$. It follows that $T_{1i}(1)^a$ and
$T_{i1}(1)^a$ belong to $\Ep(R,\Lambda)$. On the other hand,
$a$ commutes with root subgroups $X_{\pm\gamma}(K)$. By the previous lemma
we conclude that $a$ normalizes $\Ep(R,\Lambda)$.

Now, we have $\Ep(K)^b\le\Ep(R,\Lambda)^g\in H$ and by
Lemma~\ref{InUniRad} $b\in\Ep(R,\Lambda)$.
Thus, $g\in N_A(R,\Lambda)$ as required.
\end{proof}

\section{Proof of Theorem~1}\label{2=0}

In this section we assume that $2=0$ in $K$.
Let $G$ be a Chevalley group with a not simply laced root system, e.\,g. $G=\Sp$.
Recall that in this case a short root unipotent element $h$
is called a \emph{small unipotent element}, see~\cite{Gordeev}.
The terminology reflects the fact that the conjugacy class of $h$ is small.
In our settings a small unipotent element is conjugate to $T_{ij}(\mu)$,
where $i\ne\pm j$ and $\mu\in R$.

The following lemma was obtained over a field by Golubchik and Mikhalev
in~\cite{GolMikh}, Gordeev in~\cite{Gordeev} and
Nesterov and Stepanov in~\cite{NestStepF4}.

\begin{lem}\label{identity}
Let $\Phi=B_n,C_n,F_4$ and let $R$ be a ring such that $2=0$.
Let $\alpha$ be a long root and $g\in G(\Phi,R)$.
If $h\in G(R)$ is a small unipotent element, then
$X_\alpha(R)^{h^g}$ commutes with $X_\alpha(R)$.
\end{lem}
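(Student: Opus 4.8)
The plan is to pass to the natural action of $\Sp(A)$ on the column module $A^{2n}$ (rows and columns indexed by $I$) and to exploit the fact that long root elements act there as symplectic transvections. Write $\langle x,y\rangle=\T{x}Fy$ for the symplectic form and let $\delta_i$, $i\in I$, denote the standard basis columns. For a long root $\alpha=2\eps_k$ the subgroup $X_\alpha(R)=\{e+t\,e_{k,-k}\mid t\in R\}$ acts as the transvections $\tau_{\delta_k,t}\colon x\mapsto x+t\langle x,\delta_k\rangle\delta_k$, all in the single fixed direction $\delta_k$ (note $\langle\delta_{-k},\delta_k\rangle=-1$). More generally, for $f\in\Sp(A)$ one checks $f^{-1}\tau_{v,t}f=\tau_{f^{-1}v,t}$, using that $f$ preserves the form; hence every $\Sp(A)$-conjugate of $X_\alpha(R)$ is again a transvection subgroup, its direction being the image of $\delta_k$. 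In particular $X_\alpha(R)^{h^g}$ is the transvection subgroup with direction $w=(h^g)^{-1}\delta_k$.

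First I record the commuting criterion: the transvection subgroups $\{\tau_{v,s}\}_s$ and $\{\tau_{w,t}\}_t$ commute elementwise whenever $\langle v,w\rangle=0$. This is a one-line computation from $\tau_{v,s}\tau_{w,t}(x)-\tau_{w,t}\tau_{v,s}(x)=st\,\langle w,v\rangle\bigl(\langle x,w\rangle\,v+\langle x,v\rangle\,w\bigr)$, which vanishes for all $x$ once $\langle v,w\rangle=0$; this step is characteristic-free. Thus the whole lemma reduces to the single scalar identity $\langle\delta_k,(h^g)^{-1}\delta_k\rangle=0$. Next I simplify this scalar. Since $g$ preserves the form, $\langle\delta_k,(h^g)^{-1}\delta_k\rangle=\langle\delta_k,g^{-1}h^{-1}g\delta_k\rangle=\langle g\delta_k,h^{-1}g\delta_k\rangle$. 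Because $h$ is a small unipotent element it is $\Sp(R)$-conjugate to a short root element $S=T_{ij}(\mu)$, say $h=y^{-1}Sy$; moving $y$ across the form as well and setting $u=yg\delta_k$, the identity becomes $\langle u,S^{-1}u\rangle=0$. As $S^{-1}$ is again a short root element, it suffices to prove the clean statement $\langle u,Su\rangle=0$ for every short root element $S$ and every vector $u$.

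Finally I prove $\langle u,Su\rangle=0$ by direct computation, and this is the only place where $2=0$ enters. Writing $S=e+M$ with $M$ the square-zero off-diagonal part of a short root element, e.g. $M=\mu(e_{ij}-e_{-j,-i})$ for $S=T_{ij}(\mu)$, the alternating property $\langle u,u\rangle=0$ gives $\langle u,Su\rangle=\langle u,Mu\rangle$. Evaluating $\langle u,Mu\rangle$ coordinatewise, using $\langle\delta_p,\delta_q\rangle\neq0$ only for $q=-p$ together with $\langle\delta_{-i},\delta_i\rangle=-\langle\delta_i,\delta_{-i}\rangle$, the two surviving terms pair up to give $\pm2\mu\,u_{-i}u_{j}$ (and the analogous single monomial for the $\eps_i+\eps_j$ and $-\eps_i-\eps_j$ types). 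In characteristic $2$ this is $0$, which finishes the argument. I expect the genuine content to lie in the transvection reduction and the orthogonality criterion of the first two paragraphs; the concluding computation is short, and its vanishing is exactly the characteristic-$2$ cancellation that singles out small (short root) unipotents, the factor $2$ being what would obstruct the identity in other characteristics.
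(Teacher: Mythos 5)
Your computation is correct as far as it goes, and it is a genuinely different argument from the paper's. The paper does not compute anything: it observes that the claim is an identity with constants, hence is inherited by subrings and quotient rings, and that every commutative ring with $2=0$ is a quotient of a polynomial ring over $\mathbb{F}_2$, which in turn embeds into a field of characteristic $2$ --- thereby reducing the lemma to the known field case due to Golubchik--Mikhalev, Gordeev, and Nesterov--Stepanov. Your argument, by contrast, is self-contained and elementary: long root elements of $\Sp(R)$ are rank-one symplectic transvections $\tau_{v,t}$, conjugation by a symplectic matrix just moves the direction vector, two transvection groups commute once their directions are orthogonal, and the relevant scalar reduces to $\langle u,Su\rangle$ for a short root element $S$, which comes out as $\pm2\mu(\cdots)$ and so vanishes when $2=0$. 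I checked the conjugation formula, the orthogonality criterion, and the final coordinate computation (including the instructive fact that a long root element $S=T_{k,-k}(\mu)$ would instead give $\pm\mu u_{-k}^2$, which is exactly why shortness of the root is essential); all of this is right, and it even exposes the mechanism more transparently than the paper does.

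The gap is one of coverage: the lemma is stated for $\Phi=B_n,C_n,F_4$ and for an arbitrary Chevalley group $G(\Phi,R)$, while your proof only treats the symplectic group, i.e.\ type $C_n$ in its standard vector representation. The identification of long root subgroups with one-dimensional transvection groups is special to $\Sp(R)$: in $\operatorname{SO}_{2n+1}$ (type $B_n$) long root elements are Eichler--Siegel-type transformations attached to isotropic planes rather than rank-one maps, and for $F_4$ there is no comparable matrix model at all, so the argument does not transfer to the other cases in the statement. Since the paper only ever applies the lemma to $\Sp_{2n}$ (in the proof of Theorem 1), your proof suffices for the application; but as a proof of the statement as written it is incomplete, and this is precisely what the paper's reduction-to-fields argument buys: uniformity over all three root systems (and all isogeny types) at the cost of invoking the field-case literature and a more abstract inheritance principle.
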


\begin{proof}
The identity with constants $[X_\alpha(R)^{h^g},X_\alpha(R)]=\{1\}$ is inherited by
subrings and quotient rings. Any commutative ring with $2=0$ is a quotient of a
polynomial ring over $\mathbb F_2$ which is a subring of a field of
characteristic~$2$.
\end{proof}

The next lemma is the last ingredient for the proof of Theorem~\ref{MAIN}.
It follows from the normal structure of the general unitary group
$\operatorname{GU}_{2n}(R,\Lambda)$ obtained by Bak and Vavilov
at the middle of 1990-s. Since this result has not been published yet,
we give a prove of a very simple special case of it.

\begin{lem}\label{NSofGU}
A normal subgroup $N$ of\/ $\Ep(R,\Lambda)$, containing a root element
$T_{ij}(1)$, coincides with\/ $\Ep(R,\Lambda)$.
\end{lem}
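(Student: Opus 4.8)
The plan is to show that $N$ contains every short root subgroup $X_\alpha(R)$ and every long root subgroup $X_\beta(\Lambda)$; since these generate $\Ep(R,\Lambda)$ and $N\le\Ep(R,\Lambda)$, this forces $N=\Ep(R,\Lambda)$. Throughout I use that $N$ is normal in $\Ep(R,\Lambda)$, so that $[\Ep(R,\Lambda),N]\le N$: whenever one factor of a commutator lies in $N$ and the other in $\Ep(R,\Lambda)$, the commutator lies in $N$.

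First I would produce one full short root subgroup inside $N$. Suppose first that $T_{ij}(1)$ is a short root element, i.e.\ $p(i,j)=\alpha$ is short. Since $n\ge3$ there is a short root $\gamma$ with $\alpha+\gamma$ short and $\widehat{\alpha\gamma}=2\pi/3$ (e.g.\ $\alpha=\eps_1-\eps_2$, $\gamma=\eps_2-\eps_3$); then $[x_\gamma(\mu),x_\alpha(\pm1)]=x_{\alpha+\gamma}(\pm\mu)\in N$ for all $\mu\in R$, giving $X_{\alpha+\gamma}(R)\le N$. If instead $T_{ij}(1)=x_{2\eps_1}(1)$ is long, I would apply the $3\pi/4$ commutator formula to get $[x_{2\eps_1}(1),x_{\eps_k-\eps_1}(\mu)]=x_{\eps_1+\eps_k}(\pm\mu)x_{2\eps_k}(\pm\mu^2)\in N$ and then isolate: choosing $m\ne1,k$ and $\gamma=\eps_m-\eps_1$, the element $x_\gamma(1)$ commutes with $x_{2\eps_k}(\pm\mu^2)$ while $\gamma+(\eps_1+\eps_k)=\eps_m+\eps_k$ is short, so commuting with $x_\gamma(1)$ kills the long factor and yields $x_{\eps_m+\eps_k}(\pm\mu)\in N$ for all $\mu\in R$. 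Either way $N$ contains a full short root subgroup.

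Next I would spread this over all short roots by the Weyl group. The key point is that I may only use Weyl representatives built from short root elements, namely $w_\delta=x_\delta(1)x_{-\delta}(-1)x_\delta(1)$ for short $\delta$, since these lie in $\Ep(R,\Lambda)$ regardless of whether $1\in\Lambda$; the long root representatives are unavailable when $\Lambda\ne R$. The reflections $s_\delta$ in short roots generate the subgroup $W(D_n)$, which acts transitively both on short roots and (for $n\ge2$) on long roots. Conjugating the short root subgroup already in $N$ by suitable $w_\delta$ therefore gives $X_\alpha(R)\le N$ for every short $\alpha$.

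Finally I would produce the long root subgroups, and this is the step where $2=0$ bites. The usual device of commuting two orthogonal short roots gives $x_\beta(\pm2\mu\nu)=e$ and is useless. Instead I would commute a long root generator of $\Ep(R,\Lambda)$ with a short root element already in $N$: for $\lambda\in\Lambda$ and $\mu\in R$ one has $[x_{2\eps_1}(\lambda),x_{\eps_k-\eps_1}(\mu)]=x_{\eps_1+\eps_k}(\pm\lambda\mu)x_{2\eps_k}(\pm\lambda\mu^2)\in N$, and since the short factor $x_{\eps_1+\eps_k}(\pm\lambda\mu)$ already lies in $N$, we get $x_{2\eps_k}(\pm\lambda\mu^2)\in N$. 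Taking $\mu=1$ yields $x_{2\eps_k}(\pm\lambda)\in N$ for all $\lambda\in\Lambda$, so $X_{2\eps_k}(\Lambda)\le N$; transitivity of $W(D_n)$ on long roots then gives $X_\beta(\Lambda)\le N$ for all long $\beta$. The main obstacle is precisely this last step: with $2=0$ one cannot manufacture long root elements from short ones additively, and it is the form parameter axioms — $\Lambda$ closed under multiplication by squares, so that $\lambda\mu^2\in\Lambda$, together with $\mu=1$ recovering all of $\Lambda$ — that make the $3\pi/4$ commutator deliver exactly the subgroup $X_{2\eps_k}(\Lambda)$. Having all short and long root subgroups in $N$, we conclude $N=\Ep(R,\Lambda)$.
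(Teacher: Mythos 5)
Your proof is correct and follows essentially the same route as the paper's: obtain one full short root subgroup from a commutator with the given element, spread it to all short roots by Weyl conjugation, and then extract the long root subgroups $X_\beta(\Lambda)$ from the $3\pi/4$ Chevalley commutator formula using the form parameter axioms. The only minor deviations are in the long-root case, where you isolate a short root subgroup directly by killing the long factor while the paper instead sets $\lambda=1$ in the same identity and uses Weyl transitivity on long roots to reduce to the short case, and in your explicit restriction to short-root Weyl representatives (generating $W(D_n)$), a subtlety the paper leaves implicit.
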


\begin{proof}
First, suppose that $i\ne\pm j$.
Take $k\ne\pm i,\pm j$. Then, $[T_{ij}(1),T_{jk}(\xi)]=T_{ik}(\mu)\in N$.
Since the Weyl group acts transitively on the set of all short roots,
we have $T_{lm}(\mu)\in N$ for all $l\ne\pm m$ and $\mu\in R$.
Further,
$$T_{l,-m}(-\lambda)[T_{lm}(1),T_{m,-m}(\lambda)]=T_{l,-l}(\lambda)\in N$$
for all $\lambda\in\Lambda$ and $l\in I$.

Now, let $i=m$, $j=-m$ and $l\ne\pm m$. Put $\lambda=1$ in the latter commutator identity.
Then, $T_{l,-m}(1)T_{l,-l}(1)\in N$. By transitivity of the Weyl group we know that
$T_{l,-l}(1)\in N$, therefore $T_{l,-m}(1)\in N$. By the first paragraph of the proof,
$N$ contains all the generators of $\Ep(R,\Lambda)$.
\end{proof}

Now we are ready to prove Theorem~\ref{MAIN}.
The idea of the proof is the same as for the main result
of~\cite{StepStandard}.

\begin{proof}[Proof of Theorem~\ref{MAIN}]
Let $(R,\Lambda)$ be the form subring associated with $H$.
Put $h=T_{12}(1)$ and $x=T_{1,-1}(\lambda)$, where $\lambda\in\Lambda$.
Take two arbitrary elements
$a,b$ from $\Ep(R,\Lambda)$ and consider the element $c=x^{h^{agb}}\in H$.
By Lemma~\ref{identity} this element commutes with a long root subgroup
and by Lemma~\ref{InParabolic} $c\in N_A(R)$. Rewrite $c$ in the form
$$
c=\bigl(g^{-1}  (a^{-1}h^{-1}a)g
       (bxb^{-1})
        g^{-1}  (a^{-1}h     a)g\bigr)^b
$$

\noindent
Since $b\in\Ep(R,\Lambda)$, the element $bcb^{-1}$ is in $N_A(R)$.
Fix $a$ and let $b$ and $\lambda$ vary. The subgroup generated by
$bxb^{-1}$ is normal in $\Ep(R,\Lambda)$. By
Lemma~\ref{NSofGU} it must coincide with  $\Ep(R,\Lambda)$. Thus,
$\Ep(R,\Lambda)^{g^{-1} (a^{-1}h^{-1}a)g}\in N_A(R)$,
and by Lemma~\ref{normalizer} $(a^{-1}h^{-1}a)^g\in N_A(R)$.

Again, elements of the form $a^{-1}h^{-1}a$, as $a$ ranges over $\Ep(R,\Lambda)$,
generate a normal subgroup in $\Ep(R,\Lambda)$.
The minimal normal
subgroup of $\Ep(R,\Lambda)$ containing $h$ must be equal to $\Ep(R,\Lambda)$
by Lemma~\ref{NSofGU}.
Therefore, $\Ep(R,\Lambda)^g\in N_A(R)$. By Lemma~\ref{normalizer} one
has $g\in N_A(R)$, which completes the proof.
\end{proof}

\providecommand{\bysame}{\leavevmode\hbox to3em{\hrulefill}\thinspace}
\providecommand{\MR}{\relax\ifhmode\unskip\space\fi MR }
\providecommand{\MRhref}[2]{%
  \href{http://www.ams.org/mathscinet-getitem?mr=#1}{#2}
}
\providecommand{\href}[2]{#2}

\end{document}